\newcommand{\R}{\ensuremath{\mathbb R}}
\newcommand{\C}{\ensuremath{\mathbb C}}
\newcommand{\N}{\ensuremath{\mathbb N}}
\newcommand{\Z}{\ensuremath{\mathbb Z}}
\newcommand{\D}{d}
\def\gcd{\mathop{\rm gcd}}
\def\TN{\rm TN}
\def\TP{\rm TP}
\newtheorem{theorem}[equation]{Theorem}
\newtheorem{proposition}[equation]{Proposition}
\newtheorem{corollary}[equation]{Corollary}
\newtheorem{lemma}[equation]{Lemma}
\theoremstyle{definition}
\newtheorem{remark}[equation]{Remark}
\newtheorem{definition}[equation]{Definition}
\numberwithin{equation}{section}
\begin{document}

\title{Total nonnegativity of GCD matrices and kernels}

\author[D.~Guillot]{Dominique Guillot}
\author[J.~Wu]{Jiaru Wu}

\date{\today}

\keywords{Greatest common divisor matrix, GCD matrix, totally nonnegative matrix, totally positive matrix, Green's matrix}
\subjclass[2010]{15B48, 11C20, 11A05, 15A15}
\begin{abstract}
Let $X = (x_1,\dots,x_n)$ be a vector of distinct positive integers. The $n \times n$ matrix $S = S(X) := (\gcd(x_i,x_j))_{i,j=1}^n$, where $\gcd(x_i,x_j)$ denotes the greatest common divisor of $x_i$ and $x_j$, is called the greatest common divisor (GCD) matrix on $X$. By a surprising result of Beslin and Ligh [Linear Algebra and Appl. 118], all GCD matrices are positive definite. In this paper, we completely characterize the GCD matrices satisfying the stronger property of being totally nonnegative (TN) or totally positive (TP). As we show, a GCD matrix is never TP when $n \geq 3$, and is TN if and only if it is $TN_2$, i.e., all its $2 \times 2$ minors are nonnegative. We next demonstrate that a GCD matrix is $\TN_2$ if and only if the exponents of each prime divisor in the prime factorization of the $x_i$s form a monotonic sequence. Reformulated in the language of kernels, our results characterize the subsets of integers over which the kernel $K(x,y) = \gcd(x,y)$ is totally nonnegative. The proofs of our characterizations depend on Gantmacher and Krein's notion of a Green's matrix. We conclude by showing that a GCD matrix is TN if and only if it is a Green's matrix. As a consequence, we obtain explicit formulas for all the minors and for the inverse of totally nonnegative GCD matrices. 
\end{abstract}
\maketitle

\section{Introduction and Main Results}
We begin by setting some notation. We denote the set of positive and non-negative integers by $\N$ and $\Z_{\geq 0}$ respectively. Given two integers $a,b \in \N$, we write $a\ |\ b$ if $a$ divides $b$, i.e., if $b = \lambda a$ for some $\lambda \in \N$. We denote the greatest common divisor of two integers $m,n \in \N$ by $\gcd(m,n)$. More generally, we denote by $\gcd(n_1,n_2,\dots,n_k)$ the greatest common divisor of $n_1,\dots, n_k$.

\subsection{GCD matrices}
\begin{definition}
Let $X = (x_1,\dots,x_n) \in \N^n$ be a vector of positive integers. The $n \times n$ matrix $S = S(X) := (s_{ij})_{i,j=1}^n$ where $s_{ij} = \gcd(x_i, x_j)$ is called the {\it greatest common divisor (GCD) matrix} on $X$.
\end{definition}

\noindent The study of GCD matrices goes back to 1875, when H.~J.~S.~Smith  \cite{smith1875} was able to compute the determinant of $S((1,2,\dots,n))$: 
\[
\det (\gcd(i,j))_{i,j=1}^n = \phi(1) \phi(2) \dots \phi(n), 
\]
where $\phi$ denotes Euler's totient function. Almost 150 years later, these matrices and their generalizations continue to attract the attention of number theorists and linear algebraists (see e.g.~\cite{Alt2017,Haukkanen2018,Haukkanen1997, Ilmonen2018, Lin2018, Lindstrom1969, Mattila2014, bhat1991, Wilf1968} and the references therein). Of particular interest is a surprising result of Beslin and Ligh which shows that GCD matrices are always positive definite.
\begin{theorem}[{Beslin and Ligh \cite[Theorem 2]{beslin1989greatest}}]\label{Tbeslin}
Let $X = (x_1,\dots,x_n)$ be a vector of distinct positive integers. Then the GCD matrix $S(X)$ is positive definite.
\end{theorem}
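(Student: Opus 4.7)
The plan is to realize $S(X)$ as a Gram matrix via Euler's totient identity, and then upgrade the resulting positive semidefiniteness to strict positive definiteness by exhibiting full row rank in the underlying factor.

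\textbf{Factorization.} The starting point is the classical identity $n = \sum_{d \mid n} \phi(d)$ applied to $n = \gcd(x_i, x_j)$, which yields
\[
\gcd(x_i, x_j) = \sum_{d \mid x_i,\ d \mid x_j} \phi(d).
\]
Let $\{d_1, \dots, d_m\}$ be any finite set of positive integers containing every divisor of every $x_k$, define the $n \times m$ matrix $E$ by $E_{ik} = 1$ if $d_k \mid x_i$ and $0$ otherwise, and set $\Phi = \diag(\phi(d_1), \dots, \phi(d_m))$. The identity above reads $S(X) = E \Phi E^T$. Since $\phi(d) \geq 1$ for every $d \in \N$, $\Phi$ is positive definite, so $S(X)$ is at least positive semidefinite.

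\textbf{Full row rank.} To strengthen this to positive definiteness, it suffices to show that $E$ has rank $n$. I would extract the $n$ columns of $E$ indexed by $d_k \in \{x_1, \dots, x_n\}$, which are distinct by hypothesis. Choosing a linear extension of the divisibility partial order on these indices --- so that $x_j \mid x_i$ with $i \neq j$ forces $j < i$ --- renders the resulting $n \times n$ submatrix, whose $(i,j)$ entry is the indicator of $x_j \mid x_i$, lower triangular with $1$'s on the diagonal, hence of determinant $1$. Consequently $E$ has full row rank, and for any nonzero $v \in \R^n$,
\[
v^T S(X) v = v^T E \Phi E^T v = \bigl\| \sqrt{\Phi}\, E^T v \bigr\|_2^2 > 0,
\]
because $E^T v \neq 0$ and $\Phi$ has strictly positive diagonal.

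The only step requiring any thought is the rank argument, and the partial-order trick disposes of it cleanly: divisibility is a partial order, so a linear extension exists, and the resulting triangular structure is forced. I therefore anticipate no serious obstacle; everything else is either Euler's totient identity or a one-line triangular determinant computation.
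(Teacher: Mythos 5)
Your proof uses exactly the same Gram factorization $S(X) = E\Phi E^T$ via the identity $\sum_{d\mid x}\phi(d)=x$ that the paper sketches (with $A = E\sqrt{\Phi}$ in the paper's notation), and your full-row-rank argument via a linear extension of the divisibility poset correctly fills in the step the paper only cites to Beslin--Ligh. The reasoning is sound and complete.
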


\noindent Indeed, let $D = \{d_1,\dots,d_m\}$ be a set containing $X$ and all the divisors of the integers in $X$. Define a $n \times m$ matrix $A = (a_{ij})$ by 
\[
a_{ij} := e_{ij} \cdot \sqrt{\phi(d_j)}, 
\]
where 
\[
e_{ij} := \begin{cases}
1 & \textrm{if } d_j\ |\ x_i \\
0 & \textrm{otherwise},
\end{cases}
\]
and where $\phi(x)$ is Euler's totient function. By a direct calculation, 
\begin{equation*}
(AA^T)_{ij} = \sum_{k=1}^m a_{ik} a_{jk} = \sum_{\substack{d_k | x_i \\ d_k | x_j}} \sqrt{\phi(d_k)}\sqrt{\phi(d_k)} 
= \sum_{d_k | \gcd(x_i,x_j)} \phi(d_k) 
= \gcd(x_i,x_j).
\end{equation*} 
The last equality follows from the well-known identity $\sum_{d \mid x} \phi(d) = x$. Hence $S(X) = AA^T$ and so $S(X)$ positive semidefinite. A refined analysis of the rank of the previous matrices shows that $S(X)$ is also non-singular (see \cite{beslin1989greatest} for the details).

\begin{remark}\label{RBeslin}
Notice that if the $x_i$s are not distinct in Theorem \ref{Tbeslin}, the matrix $S(X)$ is still positive semidefinite, but is singular.
\end{remark}

The main goal of the paper is to determine which GCD matrices satisfy the stronger property of being {\it totally nonnegative}.

\subsection{Main results}
Recall that a real symmetric matrix is positive definite (resp.~semidefinite) if and only if its {\it principal} minors are positive (resp.~nonnegative). A stronger notion is that of totally positive (resp.~nonnegative) matrices, where {\it all} minors are required to be positive (resp.~nonnegative). These matrices arise in several areas including  approximation theory \cite{gasca2013total}, cluster algebras \cite{berenstein1996parametrizations,fomin1999double,fomin2000total}, combinatorics \cite{brenti1989unimodal,brenti1995combinatorics},  integrable systems \cite{kodama2011kp, kodama2014kp}, network analysis
\cite{postnikov2006total}, oscillatory matrices \cite{ando1987totally}, and representation theory \cite{lusztig123total,lusztig9total,rietsch2003totally}. 

More precisely, let $\alpha \subseteq \{1,\dots,m\}, \beta \subseteq \{1,\dots,n\}$, and let $A \in \R^{m \times n}$. We denote by $A[\alpha,\beta]$ the {\it submatrix} of $A$ with row indices in $\alpha$ and column indices in $\beta$, i.e.,
\[
A[\alpha,\beta] := (a_{ij})_{i \in \alpha, j \in \beta}.
\] 
When $\alpha = \beta$, we let $A[\alpha] := A[\alpha, \alpha]$ to simplify the notation.
\begin{definition}
Let $A = (a_{ij}) \in \R^{m \times n}$. The matrix $A$ is said to be:
\begin{itemize}
\item {\it totally nonnegative} if $\det A[\alpha, \beta] \geq 0$ for all $\alpha \subseteq \{1,\dots,m\}, \beta \subseteq \{1,\dots,n\}$ with $|\alpha| = |\beta|$. 
\item {\it totally positive} if $\det A[\alpha, \beta] > 0$  for all $\alpha \subseteq \{1,\dots,m\}, \beta \subseteq \{1,\dots,n\}$ with $|\alpha| = |\beta|$. 
\item $\TN_k$ if all minors of $A$ of size $\leq k$ are nonnegative.
\end{itemize}

\noindent We write $A \in \TN$, $A \in \TP$, or $A \in \TN_k$ to denote the fact that $A$ is TN, TP, or $\TN_k$, respectively.
\end{definition}

Following Beslin and Ligh's result (Theorem \ref{Tbeslin}), it is natural to examine which GCD matrices are totally nonnegative or totally positive. Our first main result shows that a GCD matrix is $\TN$ if and only if it is $\TN_2$.

\begin{theorem}[Main Result 1]\label{Tmain1}
Let $n \geq 3$ and let $X = (x_1,\dots,x_n) \in \N^n$. Then the following are equivalent for the GCD matrix $S(X)$: 
\begin{enumerate}
\item $S(X)$ is $\TN_2$.
\item $S(X)$ is $\TN$.
\item For $1 \leq i \leq j \leq k \leq n$, 
\begin{enumerate}
\item $\gcd(x_i,x_k) = \gcd(x_i,x_j,x_k)$ and 
\item $x_j \cdot \gcd(x_i, x_k) \mid x_i x_k$.
\end{enumerate}
\item For $1 \leq i \leq j \leq k \leq n$, $\gcd(x_i,x_j)\gcd(x_j,x_k) = x_j \cdot \gcd(x_i,x_k)$.
\end{enumerate}
Moreover, the above conditions imply that for all $1 \leq i \leq j \leq k \leq l \leq n$, 
\begin{equation}\label{Etn2}
\gcd(x_i, x_k) \cdot \gcd(x_j,x_l) = \gcd(x_i,x_l)\cdot \gcd(x_j,x_k).
\end{equation}
\end{theorem}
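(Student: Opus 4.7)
The plan is to establish the cycle $(2)\Rightarrow(1)\Rightarrow(4)\Leftrightarrow(3)\Rightarrow(2)$ and then read the ``moreover'' identity off the structural consequence that emerges. The implication $(2)\Rightarrow(1)$ is by definition. For $(1)\Rightarrow(4)$, with $i<j<k$ fixed, I would focus on the specific $2\times 2$ minor on rows $\{i,j\}$ and columns $\{j,k\}$, whose determinant is
\[
\gcd(x_i,x_j)\gcd(x_j,x_k)-x_j\gcd(x_i,x_k).
\]
The key numerical fact is that for any positive integers $a,b,c$, the product $\gcd(a,b)\gcd(b,c)$ always \emph{divides} $b\gcd(a,c)$; this is an immediate one-line case check on $p$-adic valuations, reducing to $\min(\alpha,\beta)+\min(\beta,\gamma)\leq\beta+\min(\alpha,\gamma)$ for any $\alpha,\beta,\gamma\in\Z_{\geq 0}$. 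In particular $\gcd(x_i,x_j)\gcd(x_j,x_k)\leq x_j\gcd(x_i,x_k)$, so $\TN_2$ forces equality between two positive integers where one divides the other, which is exactly~(4). The equivalence $(3)\Leftrightarrow(4)$ is a similar prime-by-prime case split: writing $\alpha,\beta,\gamma$ for the valuations of $x_i,x_j,x_k$, condition~(4) reads $\min(\alpha,\beta)+\min(\beta,\gamma)=\beta+\min(\alpha,\gamma)$, and one checks by enumerating the orderings of $\alpha,\beta,\gamma$ that this single identity is equivalent to the pair $\min(\alpha,\gamma)\leq\beta$ and $\beta+\min(\alpha,\gamma)\leq\alpha+\gamma$ which encodes~(3).

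The real content lies in $(4)\Rightarrow(2)$. My plan is to extract a Green's matrix factorization from~(4). Specialising~(4) to the triple $(1,i,j)$ for $1\leq i\leq j\leq n$ gives
\[
\gcd(x_i,x_j)=\alpha_i\beta_j \quad \text{for } i\leq j, \qquad \text{where } \alpha_i:=\frac{x_i}{\gcd(x_1,x_i)},\ \beta_j:=\gcd(x_1,x_j).
\]
Hence $S(X)$ is the symmetric Green's matrix with entries $\alpha_{\min(i,j)}\beta_{\max(i,j)}$. I would then invoke the classical Gantmacher--Krein characterization (which the paper has presumably set up in a dedicated section on Green's matrices): such a symmetric Green's matrix is $\TN$ if and only if $\alpha_i,\beta_i>0$ and the ratios $\alpha_i/\beta_i$ are monotonic in $i$. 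The required monotonicity comes for free: positive semidefiniteness of $S(X)$ (Theorem~\ref{Tbeslin}) gives $x_ix_j-\gcd(x_i,x_j)^2=\alpha_i\beta_j(\alpha_j\beta_i-\alpha_i\beta_j)\geq 0$, so $\alpha_i/\beta_i$ is nondecreasing. I expect this Green's-matrix step to be the main obstacle, since it requires repackaging the arithmetic identity~(4) as a linear-algebraic factorization and then invoking a nontrivial general TN criterion.

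The ``moreover'' identity~\eqref{Etn2} drops out of the factorization: for $i\leq j\leq k\leq l$, both $\gcd(x_i,x_k)\gcd(x_j,x_l)$ and $\gcd(x_i,x_l)\gcd(x_j,x_k)$ equal $\alpha_i\alpha_j\beta_k\beta_l$.
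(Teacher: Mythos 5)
Your proof is correct and follows the same overall skeleton as the paper's---both hinge on extracting a Green's (single-pair) matrix factorization from condition~(4) and then invoking the Gantmacher--Krein TN criterion for Green's matrices---but you replace several of the intermediate steps with more streamlined arguments. The paper routes $(1)\Rightarrow(3)$ and $(3)\Leftrightarrow(4)$ through a separate $3\times 3$ proposition (Proposition~\ref{P3x3}) supported by Lemma~\ref{Lchain}, whereas you handle these directly with the $p$-adic valuation inequality $\min(\alpha,\beta)+\min(\beta,\gamma)\le\beta+\min(\alpha,\gamma)$ and a short enumeration of orderings; this is tighter and also upgrades the paper's inequality to the divisibility $\gcd(a,b)\gcd(b,c)\mid b\gcd(a,c)$. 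For the factorization, you specialize~(4) at the triple $(1,i,j)$ to get $p_i=x_i/\gcd(x_1,x_i)$, $q_j=\gcd(x_1,x_j)$, whereas the paper uses the triple $(1,i,n)$ to get $p_i=\gcd(x_i,x_n)/\gcd(x_1,x_n)$, $q_j=\gcd(x_1,x_j)$; these are equivalent parametrizations (they differ by the scalar $\gcd(x_1,x_n)$ split between $p$ and $q$). The most genuine departure is in verifying the monotonicity of the ratios $p_i/q_i$: the paper derives it from the arithmetic facts $\gcd(x_i,x_n)=\gcd(x_i,\dots,x_n)$ and $\gcd(x_1,x_i)=\gcd(x_1,\dots,x_i)$ (so $p_i$ is nondecreasing and $q_i$ nonincreasing), while you deduce it in one line from positive semidefiniteness of the $2\times 2$ principal minors $x_ix_j-\gcd(x_i,x_j)^2\ge 0$. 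Your route to the ``moreover'' identity~\eqref{Etn2} via the factorization $\alpha_i\alpha_j\beta_k\beta_l$ is also cleaner than the paper's repeated applications of~(4). Both proofs need the same heavy tool (the Green's-matrix TN criterion), so neither is strictly more elementary, but your local arguments are more economical.

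Two small notes on exposition rather than substance: in your cycle $(2)\Rightarrow(1)\Rightarrow(4)\Leftrightarrow(3)\Rightarrow(2)$ the last arrow is really $(4)\Rightarrow(2)$ as you in fact prove; and for condition~(4) with $i=j$ or $j=k$ the identity is trivially an equality, so restricting to $i<j<k$ in the minor argument loses nothing, but you might state this.
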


As a consequence, we immediately obtain that no GCD matrix is TP when $n \geq 3$.
\begin{corollary}
Let $n \geq 3$ and let $X = (x_1,\dots,x_n) \in \N^n$. Then $S(X)$ is not TP.
\end{corollary}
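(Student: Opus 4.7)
The plan is to derive the corollary as a direct consequence of Theorem \ref{Tmain1}, by exhibiting a specific $2\times 2$ minor that must vanish whenever the GCD matrix is TN, contradicting the strict positivity required by TP.

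First, I would dispose of the edge case where the entries of $X$ are not distinct: in that case, two rows of $S(X)$ coincide, so $\det S(X) = 0$ and $S(X)$ is not TP. So I may assume the $x_i$ are distinct. Now suppose for contradiction that $S(X)$ is TP. Since TP implies TN, and $n \geq 3$, Theorem \ref{Tmain1} gives the equality in condition (4): for all $1 \leq i \leq j \leq k \leq n$,
\begin{equation*}
\gcd(x_i,x_j)\gcd(x_j,x_k) = x_j \cdot \gcd(x_i,x_k).
\end{equation*}

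The key observation is that the right-hand side is, up to sign, the other diagonal product of a genuine $2 \times 2$ minor of $S(X)$. Pick any triple $1 \leq i < j < k \leq n$ (possible since $n \geq 3$) and consider the submatrix on rows $\{i,j\}$ and columns $\{j,k\}$:
\begin{equation*}
S(X)[\{i,j\},\{j,k\}] = \begin{pmatrix} \gcd(x_i,x_j) & \gcd(x_i,x_k) \\ x_j & \gcd(x_j,x_k) \end{pmatrix},
\end{equation*}
using that $\gcd(x_j,x_j) = x_j$. Its determinant equals $\gcd(x_i,x_j)\gcd(x_j,x_k) - x_j\gcd(x_i,x_k)$, which by the identity above is $0$. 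This is a $2\times 2$ minor of $S(X)$, so it must be strictly positive if $S(X) \in \TP$, contradiction.

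There is no real obstacle here: the entire content of the corollary is packaged into the equivalence (1) $\Leftrightarrow$ (4) of Theorem \ref{Tmain1}, and the only small observation needed is to recognize the required equality as the vanishing of an off-diagonal $2 \times 2$ minor. I would therefore keep the proof to a few lines once Theorem \ref{Tmain1} is available.
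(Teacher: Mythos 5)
Your proof is correct and follows essentially the same route as the paper: both exhibit a vanishing $2\times 2$ non-principal minor of $S(X)$ as a consequence of Theorem \ref{Tmain1}. The only cosmetic difference is that you read the vanishing off condition (4) directly (giving the minor $A[\{i,j\},\{j,k\}]$), whereas the paper reads it off Equation \eqref{Etn2} (giving the slightly more general $A[\{i,j\},\{k,l\}]$ with $j\le k$); these coincide when $j=k$, so it is the same argument.
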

\begin{proof}
Observe that Equation \eqref{Etn2} can equivalently be formulated as 
\begin{equation}\label{Edet0}
\det A[\{i,j\}, \{k,l\}] = 0 \qquad \forall\ 1 \leq i < j \leq k < l \leq n.
\end{equation}
Hence a GCD matrix cannot be $\TP$ when $n \geq 3$.
\end{proof}

\begin{comment}
\begin{remark}
If $A = (a_{ij}) := S(X)$, then Condition (4) in Theorem \ref{Tmain1} can be written as 
\begin{equation}\label{Etriangle}
a_{ik} = \frac{a_{ij} a_{jk}}{a_{jj}}.
\end{equation}
A symmetric $n \times n$ matrix $A = (a_{ij})$ such that $a_{22}, \dots, a_{n-1,n-1} \ne 0$ and for which Equation \eqref{Etriangle} holds for all $1 \leq i < j < k \leq n$ is said to satisfy the {\it triangle property}. More generally, a (not necessarily symmetric) matrix $A = (a_{ij})$ is said to satisfy the triangle property if Equation \eqref{Etriangle} holds for all $1 \leq i < j < k \leq n$ and all $n \geq i > j > k \geq 1$.
By an interesting result of Barrett \cite[Theorem 1]{barrett1979theorem}, a non-singular matrix with $a_{22}, \dots, a_{n-1,n-1} \ne 0$ satisfies the triangle property if and only if its inverse is tridiagonal.
\end{remark}
\end{comment}

While Theorem \ref{Tmain1} reduces verifying the total nonnegativity of GCD matrices to computing $2 \times 2$ minors, it is not clear {\it a priori} which vectors $X = (x_1,\dots,x_n)$ yield TN matrices $S(X)$. Our second main result provides an explicit description of these vectors. To state the result, we use the following notation. Let $2 = p_1 < p_2 < p_3 < \dots$ denote the list of all prime numbers in increasing order. For a given integer $x \in \N$, we denote by $e_t(x) \in \Z_{\geq 0}$ the power of $p_t$ occurring in the prime factorization of $x$. By convention, we set $e_t(x) = 0$ if $p_t$ does not divide $x$. Hence, for every $x \in \N$, 
\begin{equation}\label{EprimeFac}
x = \prod_{i=1}^\infty p_i^{e_i(x)}, 
\end{equation}
where only finitely many terms in the product are not equal to $1$.

\begin{theorem}[Main Result 2]\label{Tmain2}
Let $n \geq 3$ and let $X = (x_1,\dots,x_n) \in \N^n$. Then the following are equivalent for the GCD matrix $S(X)$: 
\begin{enumerate}
\item $S(X)$ is totally nonnegative. 
\item For each $t \in \N$, the sequence $(e_t(x_i))_{i=1}^n$ is monotonic.
\end{enumerate}
\end{theorem}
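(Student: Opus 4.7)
The plan is to reduce Theorem \ref{Tmain2} to Theorem \ref{Tmain1} via prime-by-prime valuations. By Theorem \ref{Tmain1}, $S(X) \in \TN$ is equivalent to the identity
\[
\gcd(x_i, x_j)\,\gcd(x_j, x_k) = x_j \cdot \gcd(x_i, x_k) \qquad (1 \le i \le j \le k \le n).
\]
Taking the $p_t$-adic valuation of both sides and using $v_{p_t}(\gcd(a,b)) = \min(v_{p_t}(a), v_{p_t}(b))$ together with $v_{p_t}(ab) = v_{p_t}(a) + v_{p_t}(b)$, this is equivalent to
\[
\min(e_t(x_i), e_t(x_j)) + \min(e_t(x_j), e_t(x_k)) = e_t(x_j) + \min(e_t(x_i), e_t(x_k))
\]
holding for every prime $p_t$ and all $i \le j \le k$.

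The proof then reduces to the following combinatorial lemma: a sequence $a_1, \dots, a_n$ of non-negative integers satisfies $\min(a_i, a_j) + \min(a_j, a_k) = a_j + \min(a_i, a_k)$ for all $i \le j \le k$ if and only if it is monotonic. The ``if'' direction is an immediate check in the two cases $a_i \le a_j \le a_k$ and $a_i \ge a_j \ge a_k$. For ``only if'', a short case analysis on the relative order of $(a_i, a_j, a_k)$ shows that the identity for a single triple is equivalent to $a_j \in [\min(a_i, a_k), \max(a_i, a_k)]$: when $a_j \le \min(a_i, a_k)$ the identity forces $a_j = \min(a_i, a_k)$; when $a_j \ge \max(a_i, a_k)$ it forces $a_j = \max(a_i, a_k)$; and the in-between case is automatic.

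Granted that every middle term lies between its endpoints, monotonicity follows quickly. Assume without loss of generality $a_1 \le a_n$ (otherwise reverse the sequence). For any $1 \le i < j \le n$, the middle-term property applied to the triples $(1, i, n)$ and $(i, j, n)$ (with the edge cases in which indices coincide handled trivially) yields first $a_1 \le a_i \le a_n$ and then $a_i \le a_j \le a_n$; hence $(a_m)$ is non-decreasing. Applying the lemma prime by prime completes the proof of Theorem \ref{Tmain2}. The main obstacle is the lemma's ``only if'' direction, but once it is reformulated as ``$a_j$ lies between $a_i$ and $a_k$'', monotonicity drops out from a short endpoint comparison.
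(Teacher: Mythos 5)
Your proposal is correct and follows essentially the same route as the paper: both reduce $\TN$ to the prime-by-prime condition that $e_t(x_j)$ lies between $\min(e_t(x_i), e_t(x_k))$ and $\max(e_t(x_i), e_t(x_k))$ for $i \le j \le k$, and then deduce monotonicity. The only superficial differences are that you start from Theorem~\ref{Tmain1}(4) rather than (3), skip the paper's intermediate reduction to vectors $X_t$ of prime powers, and make the final monotonicity step (via the triples $(1,i,n)$ and $(i,j,n)$) slightly more explicit than the paper's case analysis.
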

\noindent Here, by a {\it monotonic sequence}, we mean a sequence that is either non-decreasing or non-increasing. 

The proof of Theorems \ref{Tmain1} and \ref{Tmain2} depend heavily on the notion of Green's matrices, a notion first introduced by Gantmacher and Krein \cite{gantmacher1960oscillation} in their study of oscillatory matrices \footnote{Gantmacher and Krein used the terminology {\it single-pair} matrices instead of {\it Green's matrices}.}.

\begin{definition}[{see Karlin \cite[Chapter 3, \S 3]{karlin1968total}, Gantmacher and Krein \cite[Chapter 2, \S 3]{gantmacher1960oscillation}}]
A $n \times n$ matrix $A = (a_{ij})_{i,j=1}^n$ is said to be a Green's matrix (or a single-pair matrix) if 
\[
a_{ij} = p_{\min(i,j)} q_{\max(i,j)} = \begin{cases}
p_i q_j & \textrm{ if } i \leq j \\
p_j q_i & \textrm{ if } i \geq j.
\end{cases}
\]
for some $p_1,\dots,p_n, q_1,\dots,q_n \in \R$.
\end{definition}

\noindent Our last characterization of totally nonnegative GCD matrices directly involves such matrices. 
\begin{theorem}[Main Result 3]\label{Tmain3}
Let $n \geq 3$ and let $X = (x_1,\dots,x_n) \in \N^n$. Then the following are equivalent for the GCD matrix $S(X)$: 
\begin{enumerate}
\item $S(X)$ is totally nonnegative. 
\item $S(X)$ is a Green's matrix. 
\end{enumerate}
If the $x_i$s are distinct, then $(1)$ and $(2)$ are equivalent to
\begin{enumerate}
\setcounter{enumi}{2}
\item $S(X)^{-1}$ is tridiagonal with nonzero superdiagonal elements.
\end{enumerate}
Moreover, suppose $A := S(X) \in \TN$ and let $\alpha = \{i_1,\dots, i_m\}$ and $\beta = \{j_1,\dots,j_m\}$ be two subsets of $\{1,\dots,n\}$. If $\alpha_s < \beta_{s+1}$ for all $1 \leq s \leq n-1$, then we have
\[
\det A[\alpha,\beta] =\frac{\gcd(x_{k_1},x_n)\gcd(x_1,x_{l_m})}{\gcd(x_1,x_n)^m} \cdot \prod_{i=1}^{m-1} \det \begin{pmatrix}
\gcd(x_{k_{i+1}}, x_n) & \gcd(x_{l_i}, x_n) \\
\gcd(x_1, x_{k_{i+1}}) & \gcd(x_1, x_{l_i})
\end{pmatrix}, 
\]
where $k_s := \min(i_s,j_s)$ and $l_s := \max(i_s,j_s)$. In all other cases, we have $\det A[\alpha,\beta] = 0$. Also, if $A \in \TN$ is non-singular, then 
{\small\[
A^{-1} = \begin{pmatrix}
b_1 & a_2& & & \\
a_2 & b_2 & a_3 & & \\
& \ddots & \ddots & \ddots & \\
 & & a_{n-1} & b_{n-1} & a_n \\
  & & & a_n & b_n
\end{pmatrix}
\]}
where 
\[
a_{i+1} = \frac{\gcd(x_1,x_n)}{\gcd(x_i,x_n)\gcd(x_1,x_{i+1})-\gcd(x_{i+1},x_n)\gcd(x_1,x_i)} \qquad 1 \leq i \leq n-1,
\]
and
\[
b_i = \begin{cases}
- \frac{\gcd(x_2,x_n)}{\gcd(x_1,x_n)} a_2 & (n=1) \\
- \frac{\gcd(x_{i-1},x_n)\gcd(x_1,x_{i+1})-\gcd(x_{i+1},x_n)\gcd(x_1,x_{i-1})}{\gcd(x_1,x_n)}a_i a_{i+1} & (2 \leq i \leq n-1) \\
-\frac{\gcd(x_1,x_{n-1})}{\gcd(x_1,x_n)}a_n & (i=n).
\end{cases}
\]
\end{theorem}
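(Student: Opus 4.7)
The plan is to derive all three equivalences from condition (4) of Theorem \ref{Tmain1} together with classical results on Green's matrices, and then to deduce the explicit minor and inverse formulas from the resulting Green's representation. For (1) $\Rightarrow$ (2), fix $1 \leq i \leq j \leq n$ and apply Theorem \ref{Tmain1}(4) to the triples $(1,j,n)$ and $(i,j,n)$; dividing gives $\gcd(x_i,x_j) = \gcd(x_i,x_n)\gcd(x_1,x_j)/\gcd(x_1,x_n)$, which exhibits $S(X)$ as a Green's matrix with $p_i := \gcd(x_i,x_n)$ and $q_j := \gcd(x_1,x_j)/\gcd(x_1,x_n)$. Conversely, if $\gcd(x_i,x_j) = p_{\min(i,j)}q_{\max(i,j)}$, then for $i \leq j \leq k$ one has $\gcd(x_i,x_j)\gcd(x_j,x_k) = p_j q_j \cdot p_i q_k = x_j\gcd(x_i,x_k)$, which is Theorem \ref{Tmain1}(4) and hence forces $S(X) \in \TN$. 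When the $x_i$ are distinct, $S(X)$ is nonsingular by Theorem \ref{Tbeslin}, so I would invoke the classical Gantmacher--Krein characterization (cf.~Karlin \cite{karlin1968total}) to conclude (2) $\Leftrightarrow$ (3). The required non-vanishing of the superdiagonal of $S(X)^{-1}$ reduces to $p_i q_{i+1} \neq p_{i+1}q_i$, which after clearing denominators becomes $\gcd(x_i,x_{i+1})^2 \neq x_i x_{i+1}$; this holds since equality in $\gcd(a,b)^2 \leq ab$ would force $a = b$.

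For the explicit minor formula, I would substitute $a_{ij} = p_{\min(i,j)}q_{\max(i,j)}$ and note that each $2 \times 2$ block in the product equals $\gcd(x_1,x_n)(p_{k_{i+1}}q_{l_i} - p_{l_i}q_{k_{i+1}})$, so the claim reduces to
\[
\det A[\alpha,\beta] = p_{k_1}q_{l_m}\prod_{i=1}^{m-1}(p_{k_{i+1}}q_{l_i} - p_{l_i}q_{k_{i+1}}).
\]
I plan to prove this identity by induction on $m$ via Laplace expansion along the first row, exploiting the rank-one structure of the upper-triangular part of the Green's matrix (in the $m=3$ case this is exactly why only two of the three cofactor terms survive). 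The vanishing assertion when some $\alpha_s \geq \beta_{s+1}$ will be handled separately: in that regime the sub-block $A[\{i_s,\dots, i_m\}, \{j_1,\dots,j_{s+1}\}]$ has entries $p_{j_u}q_{i_t}$ and is therefore rank one, which by a dimension count forces a linear dependence among the rows of $A[\alpha,\beta]$.

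Finally, the tridiagonal inverse formula is verified directly: with the candidate $A^{-1}$ specified by the entries $a_i, b_i$, one computes $(AA^{-1})_{rs}$ in the cases $r \leq s$ and $r > s$; each such entry is a sum of at most three products of known $p_i, q_j, a_i, b_i$, and collapses to $\delta_{rs}$ after substituting the explicit values of the $a_i, b_i$ into the Green's identity $\gcd(x_i,x_j) = p_{\min(i,j)}q_{\max(i,j)}$. \textbf{Main obstacle.} The technically hardest step is the induction underlying the minor formula, whose bookkeeping must simultaneously track how $k_s = \min(i_s,j_s)$ and $l_s = \max(i_s,j_s)$ depend on the interleaving of $\alpha$ and $\beta$ and how the interlacing hypothesis $\alpha_s < \beta_{s+1}$ propagates under the reduction to a smaller submatrix. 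The vanishing case, while conceptually clean, requires the rank-one argument above to be made uniform across every way the interlacing condition can fail.
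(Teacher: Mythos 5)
Your proposal is correct, and the equivalences $(1)\Leftrightarrow(2)\Leftrightarrow(3)$ are handled essentially as in the paper: the forward direction by applying Theorem~\ref{Tmain1}(4) to two overlapping triples and dividing (the paper uses the pair $(1,i,j)$ and $(1,i,n)$, you use $(1,j,n)$ and $(i,j,n)$ --- cosmetically different, same idea, same resulting factorization up to rescaling the $(p_i),(q_j)$ pair, which is immaterial for a Green's matrix), the converse exactly as the paper via Theorem~\ref{Tmain1}(4), and $(2)\Leftrightarrow(3)$ by citing Gantmacher--Krein, with your nice elementary check that the superdiagonal entries of the inverse are nonzero via $\gcd(a,b)^2 < ab$ for $a\neq b$. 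Where you genuinely diverge is in the explicit minor and inverse formulas: the paper simply states the classical determinant formula for Green's matrices (Theorem~\ref{Tgreen}, from Karlin) and the corresponding tridiagonal inverse formula (Theorem~\ref{TGreenInverse}, from Yamamoto) as known results, and reads off the claimed expressions by substituting $p_i=\gcd(x_i,x_n)/\gcd(x_1,x_n)$, $q_j=\gcd(x_1,x_j)$. You instead propose to reprove both from scratch --- the minor formula by induction with Laplace expansion plus a rank-one/dimension-count argument for the degenerate cases, and the inverse by direct verification of $AA^{-1}=\Id$. That route is sound (the rank argument does force vanishing: a sub-block of size $(m-s+1)\times(s+1)$ of rank at most $1$ gives rows $+$ cols $-$ rank $= m+1 > m$, which kills the determinant), but it is substantially more work, and you correctly flag the inductive bookkeeping as the hardest step. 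The paper's route is shorter precisely because it outsources this to cited classical facts; if you intended a self-contained exposition, what you sketch would indeed be required, but otherwise you can close the argument in one line by citing Theorems~\ref{Tgreen} and~\ref{TGreenInverse} as the paper does.
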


The rest of the paper is structured as follows: we begin by examining the total nonnegativity of $3 \times 3$ GCD matrices in Section \ref{S3}. The proof of our main results are then given in Section \ref{Smain}. Section \ref{Skernels} concludes the paper by examining the total nonnegativity of the kernel $K(x,y) = \gcd(x,y)$, as well as total nonnegativity preservers on GCD matrices. 

\section{The $n=3$ case}\label{S3}
We begin by examining Theorem \ref{Tmain1} in the case where $n=3$.
\begin{proposition}{\label{P3x3}}
Let $a, b, c$ be distinct positive integers with greatest common divisor $d:=\gcd (a, b, c)$, and let $X := (a,b,c)$. Then the following are equivalent: 
\begin{enumerate}
\item $S(X)$ is totally nonnegative
\item  $\gcd (a,c)=d$ and $db \mid ac$.
\item $\gcd(a,b) \gcd(b,c) = b \cdot \gcd(a,c)$.
\end{enumerate}
\end{proposition}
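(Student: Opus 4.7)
The plan is to reduce total nonnegativity of the $3\times 3$ matrix $S(X)$ to checking three off-diagonal $2\times 2$ minors. By symmetry of $S(X)$, the distinct off-diagonal $2\times 2$ minors are
\[
M_1 := a\gcd(b,c) - \gcd(a,b)\gcd(a,c),\quad M_2 := \gcd(a,b)\gcd(b,c) - b\gcd(a,c),\quad M_3 := c\gcd(a,b) - \gcd(a,c)\gcd(b,c).
\]
The diagonal $2\times 2$ minors are automatically $\geq 0$ since $\gcd(x,y) \leq \min(x,y)$ implies $\gcd(x,y)^2 \leq xy$, and $\det S(X) > 0$ by the Beslin--Ligh Theorem \ref{Tbeslin} because $a,b,c$ are distinct. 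Hence $S(X) \in \TN$ iff $M_1, M_2, M_3 \geq 0$.

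For $(1) \Rightarrow (3)$, the key is a divisibility inequality valid without any extra assumption on $a,b,c$. Using $xy = \mathrm{lcm}(x,y)\gcd(x,y)$ with $x = \gcd(a,b)$ and $y = \gcd(b,c)$, together with $\gcd(\gcd(a,b),\gcd(b,c)) = \gcd(a,b,c)$, one obtains
\[
\gcd(a,b)\gcd(b,c) = \mathrm{lcm}(\gcd(a,b),\gcd(b,c)) \cdot \gcd(a,b,c).
\]
Since $\gcd(a,b)$ and $\gcd(b,c)$ both divide $b$, their lcm divides $b$; and $\gcd(a,b,c)$ divides $\gcd(a,c)$. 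Multiplying these divisibilities yields the universal inequality $\gcd(a,b)\gcd(b,c) \leq b\gcd(a,c)$, i.e., $M_2 \leq 0$. Combined with $M_2 \geq 0$ from TN, this forces $M_2 = 0$, which is exactly $(3)$. Conversely, for $(3) \Rightarrow (1)$, writing $\gcd(a,c) = \gcd(a,b)\gcd(b,c)/b$ and substituting into $M_1$ and $M_3$ factors them as $M_1 = \frac{\gcd(b,c)}{b}(ab - \gcd(a,b)^2) \geq 0$ and $M_3 = \frac{\gcd(a,b)}{b}(bc - \gcd(b,c)^2) \geq 0$, which completes the verification.

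For $(2) \Leftrightarrow (3)$, I would argue prime by prime. For each prime $p$, set $\alpha := e_p(a)$, $\beta := e_p(b)$, $\gamma := e_p(c)$. Condition $(3)$ translates to $\min(\alpha,\beta) + \min(\beta,\gamma) = \beta + \min(\alpha,\gamma)$, while condition $(2)$ translates to $\min(\alpha,\gamma) = \min(\alpha,\beta,\gamma)$ together with $\min(\alpha,\beta,\gamma) + \beta \leq \alpha + \gamma$. A short case analysis on the ordering of $\alpha,\beta,\gamma$ shows both sets of conditions are equivalent to $\beta$ lying between $\alpha$ and $\gamma$, giving $(2) \Leftrightarrow (3)$.

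The main obstacle is the universal inequality $\gcd(a,b)\gcd(b,c) \leq b\gcd(a,c)$ used in $(1) \Rightarrow (3)$: this is the nontrivial algebraic fact that upgrades the single TN inequality $M_2 \geq 0$ to the exact equality demanded by $(3)$. Once it is in hand, the remainder of the proof is routine manipulation.
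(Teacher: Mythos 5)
Your proof is correct, and the overall strategy coincides with the paper's: establish the universal inequality $\gcd(a,b)\gcd(b,c) \leq b\,\gcd(a,c)$, combine it with the TN inequality to force equality, and then verify the remaining minors. The paper's vehicle for this inequality is Lemma~\ref{Lchain}, proved prime-by-prime, whereas you obtain it (in the slightly sharper divisibility form $\gcd(a,b)\gcd(b,c) \mid b\,\gcd(a,c)$) from the identity $xy = \mathrm{lcm}(x,y)\gcd(x,y)$ applied to $x = \gcd(a,b)$, $y = \gcd(b,c)$, together with $\gcd(\gcd(a,b),\gcd(b,c)) = \gcd(a,b,c)$ and the observations that the lcm divides $b$ and $\gcd(a,b,c)$ divides $\gcd(a,c)$ --- a tidy, lemma-free alternative. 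Your $(3)\Rightarrow(1)$ step is also cleaner than the paper's: substituting $\gcd(a,c) = \gcd(a,b)\gcd(b,c)/b$ factors $M_1$ and $M_3$ as $\frac{\gcd(b,c)}{b}(ab - \gcd(a,b)^2)$ and $\frac{\gcd(a,b)}{b}(bc - \gcd(b,c)^2)$, both visibly nonnegative, whereas the paper re-invokes Lemma~\ref{Lchain} for each case. Finally, you prove $(2)\Leftrightarrow(3)$ directly by a prime-by-prime case analysis (both reduce to $\min(\alpha,\gamma) \leq \beta \leq \max(\alpha,\gamma)$ for each prime exponent), while the paper routes $(1)\Rightarrow(2)\Rightarrow(3)$ through the equality case of Lemma~\ref{Lchain}. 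Net effect: same skeleton, but your version is somewhat more self-contained and anticipates the exponent-based analysis the paper later uses for Theorem~\ref{Tmain2}, at the cost of not isolating the reusable Lemma~\ref{Lchain} that the paper also deploys in the proof of Theorem~\ref{Tmain1}.
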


\noindent The following simple lemma will be useful to prove Proposition \ref{P3x3}.

\begin{lemma}\label{Lchain}
Let $x,y,z$ be distinct positive integer. Then 
\begin{equation}\label{Etriple3}
\gcd(x,y) \gcd(y,z) \leq y \cdot \gcd(x,y,z).
\end{equation}
Moreover, equality holds in Equation \eqref{Etriple3} if and only if $\gcd(x,y,z) y \mid xz$.
\end{lemma}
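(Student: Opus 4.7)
The plan is to reduce both the inequality and the equality characterization to a purely local statement about $p$-adic valuations, and then dispatch the resulting arithmetic by an elementary case analysis. In fact I would prove the stronger divisibility
\[
\gcd(x,y)\gcd(y,z) \mid y \cdot \gcd(x,y,z),
\]
which implies \eqref{Etriple3} immediately and is amenable to prime-by-prime verification.

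Fix a prime $p$ and let $a := v_p(x)$, $b := v_p(y)$, $c := v_p(z)$, where $v_p$ denotes the $p$-adic valuation. The displayed divisibility at $p$ becomes
\[
\min(a,b) + \min(b,c) \;\leq\; b + \min(a,b,c).
\]
I would verify this by splitting on the relative order of $b$ with respect to $a$ and $c$, into the three cases $b \leq \min(a,c)$, $\min(a,c) \leq b \leq \max(a,c)$, and $b \geq \max(a,c)$. In every case one evaluates both sides directly and the inequality holds. The same bookkeeping shows that equality at $p$ holds exactly when $b \leq \max(a,c)$, with strict inequality precisely when $b > \max(a,c)$. Since divisibility of positive integers is prime-by-prime, taking conjunctions over all $p$ gives the global divisibility, and in particular \eqref{Etriple3}.

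To finish the equality characterization, I convert the divisibility $\gcd(x,y,z)\,y \mid xz$ into its $p$-local form
\[
\min(a,b,c) + b \;\leq\; a + c.
\]
A parallel (and even shorter) case analysis shows that this inequality is also equivalent to $b \leq \max(a,c)$. Hence both ``equality in \eqref{Etriple3} at $p$'' and ``divisibility at $p$'' are governed by the single condition $b \leq \max(a,c)$. Combining over all primes yields equality in \eqref{Etriple3} iff $\gcd(x,y,z)\,y \mid xz$, as claimed.

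The argument is entirely elementary; the only real subtlety is recognizing that two a priori different conditions on the valuations --- equality in the gcd inequality on one hand, and the auxiliary divisibility on the other --- both collapse to the same condition $v_p(y) \leq \max(v_p(x), v_p(z))$ at every prime. Once this is identified, the proof is pure bookkeeping with $\min$ and $\max$.
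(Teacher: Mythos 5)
Your proof is correct, and it takes a somewhat different route from the paper's. The paper first reduces to the case $d := \gcd(x,y,z) = 1$ by dividing through, then argues that $\gcd(x,y)$ and $\gcd(y,z)$ are coprime divisors of $y$, so their product divides $y$; equality is then characterized via divisors of $y$. You instead prove the sharper statement $\gcd(x,y)\gcd(y,z) \mid y\cdot\gcd(x,y,z)$ directly at each prime via the $p$-adic valuation inequality
\[
\min(a,b)+\min(b,c) \leq b + \min(a,b,c),
\]
and observe that both ``equality at $p$'' and ``$\min(a,b,c)+b \leq a+c$'' collapse to the single condition $v_p(y) \leq \max(v_p(x),v_p(z))$. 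This buys you two things: it avoids the WLOG reduction to $d=1$ entirely, and it sidesteps a small imprecision in the paper's phrasing (the paper says equality holds iff ``every divisor of $y$ divides $x$ or $z$,'' which should really be ``every \emph{prime-power} divisor of $y$ divides $x$ or $z$''; your valuation formulation makes this automatic). The paper's proof is marginally shorter, but yours is cleaner on the equality case and makes the underlying arithmetic more transparent. One stylistic note: you describe the case analysis rather than writing it out; for a final write-up you should record the three cases explicitly, since the verification that equality in both inequalities is governed by $b \leq \max(a,c)$ is the crux of the argument.
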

\begin{proof}
Let $d := \gcd(x,y,z)$. Suppose first $d=1$. If $p$ is a prime such that $p \mid \gcd(x,y) \gcd(y,z)$ then $p \mid y$. Also $p \mid \gcd(x,y)$ or $p \mid \gcd(y,z)$ but not both since $\gcd(x,y,z)=1$. It follows that $\gcd(x,y) \gcd(y,z) \leq y$. Equality holds if and only if every divisor of $y$ divides $x$ or $z$, i.e., if and only if $y \mid xz$. This proves the result when $d=1$. The general case follows by replacing $x,y,z$ by $x/d, y/d, z/d$.
\end{proof}

\begin{proof}[Proof of Proposition \ref{P3x3}]
To simplify the notation, let $A := S(X)$: 
\[
A = \begin{pmatrix}
a       & \gcd   (a, b) & \gcd (a, c)\\
\gcd (b, a)       & b & \gcd (b, c) \\
\gcd (c, a)       & \gcd (c, b) & c
\end{pmatrix}.
\]
\smallskip

\noindent$\mathbf{(1) \Rightarrow (2)}.$ Suppose $A \in \TN$. Since the $\{1,2\}, \{2,3\}$ minor of $A$ is nonnegative, we obtain that 
\[
\gcd(a,b)\gcd(b,c) \geq b \cdot \gcd(a,c).
\]
Conversely, by Lemma \ref{Lchain}, $\gcd(a,b)\gcd(b,c) \leq b \cdot \gcd(a,b,c) \leq b \cdot \gcd(a,c)$. It follows that $\gcd(a,c) = \gcd(a,b,c) = d$ and $\gcd(a,b)\gcd(b,c) = b \cdot \gcd(a,b,c)$. By the equality case in Lemma \ref{Lchain}, we conclude that $db \mid ac$. This proves (2).
\smallskip

\noindent$\mathbf{(2) \Rightarrow (3)}.$ Suppose (2) holds. By the equality case in Lemma \ref{Lchain}, we have
\[
\gcd(a,b)\gcd(b,c) = b \cdot\gcd(a,b,c).
\]
 Condition (3) now follows from the assumption that $d = \gcd(a,c)$.
\smallskip

\noindent$\mathbf{(3) \Rightarrow (1)}.$ Suppose now that (3) holds. By Theorem \ref{Tbeslin}, all principal minors of $A$ are nonnegative. Let us examine the remaining minors $A[\alpha,\beta]$ of $A$. By symmetry, there are exactly $3$ cases to consider:
	
\noindent{\bf Case 1.} $\alpha=\{1,2\}, \beta=\{2,3\}$. We have
\[
A[\alpha,\beta] = \begin{pmatrix}
\gcd (a, b) & b\\
\gcd (a, c) & \gcd (b, c)
\end{pmatrix}.
\]
Hence,  $\det A[\alpha,\beta] = \gcd(a,b) \cdot \gcd (b,c) - b \cdot  \gcd(a,c) \geq 0$ by assumption.
	
\noindent{\bf Case 2.} $\alpha=\{1,2\}, \beta = \{1,3\}$. In that case, 
\[
A[\alpha,\beta] = \begin{pmatrix}
a & \gcd (a,c)\\
\gcd (b, a) & \gcd (b, c)
\end{pmatrix}
\]	
By Lemma \ref{Lchain}, we have $ \gcd (b,a)\cdot \gcd (a,c) \leq a \cdot\gcd(a,b,c) \leq a \cdot \gcd(b,c)$. Hence $\det A[\alpha,\beta] \geq 0$.

\noindent{\bf Case 3.} $\alpha = \{2,3\}, \beta = \{1,3\}$. In that case, 
\[
A[\alpha,\beta] = \begin{pmatrix}
\gcd (b, a) & \gcd (b,c)\\
\gcd (c, a) & c
\end{pmatrix}
\]
As in Case 2, by Lemma \ref{Lchain}, we have $\gcd (b,c)\cdot \gcd (c,a) \leq c \cdot\gcd(a,b,c) \leq c \cdot \gcd(b,a)$. Thus $\det A[\alpha,\beta] \geq 0$.

We therefore conclude that $A$ is $\TN$.
\end{proof}

\begin{remark}\label{Rtriple}
Notice that the $(3) \Rightarrow (2)$ implication in Proposition \ref{S3} shows that  
\[
\gcd(a,b)\gcd(b,c) = b \cdot \gcd(a,c) \implies \gcd(a,c) = \gcd(a,b,c).
\]
One can also, of course, proves this directly: suppose $\gcd(a,c) \ne \gcd(a,b,c)$. Then there exists a prime $p$ such that $p \mid a,c$, but $p \nmid b$. Thus $p \mid b\cdot \gcd(a,c)$, but $p \nmid \gcd(a,b)\gcd(b,c)$.
\end{remark}

\section{Proof of the Main Results}\label{Smain}

Before we proceed to the proofs of our main results, we recall some important properties of Green's matrices.

\begin{theorem}[{see Karlin \cite[Chapter 3, Theorem 3.1]{karlin1968total}}]\label{Tgreen}
A Green's matrix $A = (p_{\min(i,j)} q_{\max(i,j)})_{i,j=1}^n$ is TN if and only if all the numbers $p_1,\dots,p_n, q_1,\dots,q_n$ have the same strict sign and 
\[
\frac{p_1}{q_1} \leq \frac{p_2}{q_2} \leq \dots \leq \frac{p_n}{q_n}.
\]
Moreover, for any two subsets $\alpha = \{i_1,\dots,i_m\}$ and $\beta = \{j_1,\dots,j_m\}$ of $\{1,\dots,n\}$, we have
\begin{equation}\label{EGreenDet}
\det A[\alpha,\beta] = p_{k_1} q_{l_m} \cdot \prod_{i=1}^{m-1} \det \begin{pmatrix}
p_{k_{i+1}} & p_{l_i} \\
q_{k_{i+1}} & q_{l_i}
\end{pmatrix}, 
\end{equation}
where $k_s := \min(i_s,j_s)$ and $l_s := \max(i_s,j_s)$ provided $\max(i_s, j_s) < \min(i_{s+1}, j_{s+1})$ for all $s=1,\dots,m-1$. In all other cases, $\det A[\alpha, \beta] = 0$.
\end{theorem}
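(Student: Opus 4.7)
The core of the plan is to establish the symmetric factorization
\begin{equation*}
A = \diag(q) \cdot L \cdot D \cdot L^T \cdot \diag(q),
\end{equation*}
where $L$ is the $n \times n$ lower-triangular matrix with all entries on and below the diagonal equal to $1$, and $D := \diag(r_1, r_2-r_1, \dots, r_n-r_{n-1})$ with $r_i := p_i/q_i$ and $r_0 := 0$. This identity arises from the telescoping $r_{\min(i,j)} = \sum_{k=1}^{\min(i,j)}(r_k - r_{k-1})$, valid when the $q_i$ are all nonzero; configurations with some $p_i$ or $q_i$ equal to zero are either not $\TN$ or reduce to a lower-dimensional instance after removing a zero row/column.

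The $\TN$ characterization then follows quickly. For the implication from $\TN$ to the sign-plus-monotonicity condition, I would combine the nonnegativity of the diagonal entries $A_{ii} = p_i q_i$ and of the off-diagonal entries $A_{ij} = p_{\min(i,j)} q_{\max(i,j)}$ to force all $p_i, q_j$ to share a common strict sign, and then extract the ordering $r_i \leq r_{i+1}$ from the consecutive $2 \times 2$ minors $\det A[\{i,i+1\}] = p_i q_{i+1} (p_{i+1} q_i - p_i q_{i+1})$. Conversely, under these two conditions each of the factors $\diag(q)$, $L$, $D$, $L^T$ in the factorization above is $\TN$, so Cauchy--Binet yields $A \in \TN$.

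For the explicit minor formula, I would start from the identity $\det A[\alpha, \beta] = (\prod_{i \in \alpha} q_i)(\prod_{j \in \beta} q_j) \det M[\alpha, \beta]$, where $M := LDL^T$ is the ``min-matrix'' $M_{ij} = r_{\min(i,j)}$. The vanishing of $\det A[\alpha, \beta]$ when $\max(i_s, j_s) \geq \min(i_{s+1}, j_{s+1})$ for some $s$ then reduces to showing that the corresponding minor of $M$ vanishes, which can be established by a case analysis exhibiting two proportional rows or columns in $M[\alpha, \beta]$; the key point is that any sub-block of $A$ lying entirely above (or entirely below) the diagonal has rank one, since the entries factor as $p_{\text{row}} q_{\text{col}}$ (or the reverse). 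For the generic staircase formula, I would induct on $m$, expanding $\det A[\alpha, \beta]$ along a carefully chosen row or column and using the telescoping structure to peel off the prefactor $p_{k_1} q_{l_m}$ together with the first $2 \times 2$ determinant, leaving a Green's minor of size $m-1$ to which the hypothesis applies. The base cases $m=1,2$ are direct computations, using $\{i_s, j_s\} = \{k_s, l_s\}$ to symmetrize.

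The main obstacle is the combinatorial bookkeeping in the minor formula. The $\TN$ characterization is essentially a one-line corollary of the factorization, but producing the explicit staircase product requires carefully tracking the interplay of the $k_s, l_s$ indices and verifying that the induction isolates exactly one new $2 \times 2$ factor at each step. Once available, this minor formula feeds directly into the adjugate computation yielding the explicit tridiagonal form of $A^{-1}$ invoked in Main Result 3.
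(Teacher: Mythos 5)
The paper does not prove this theorem itself; it is cited verbatim from Karlin's book. So I can only evaluate your proposal on its own merits.

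Your factorization $A = \diag(q)\, L\, D\, L^T \diag(q)$ is correct, and it gives a clean proof of the first half of the theorem. The direction from the sign-plus-monotonicity condition to $\TN$ does follow by Cauchy--Binet, since $L$, $L^T$, and the diagonal factors are $\TN$ once one reduces WLOG to $p_i, q_i > 0$; and the forward direction from the entries and the consecutive $2\times 2$ minors is straightforward. (You should note explicitly that when some $p_i$ or $q_i$ vanishes the corresponding row and column of $A$ are identically zero, so the statement degenerates; this justifies the ``strict sign'' wording.)

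However, there is a genuine gap in your argument for the vanishing of $\det A[\alpha,\beta]$ when the staircase condition $\max(i_s,j_s) < \min(i_{s+1},j_{s+1})$ fails. You claim the vanishing can ``be established by a case analysis exhibiting two proportional rows or columns in $M[\alpha,\beta]$.'' This is false in general. Take $\alpha = \{1,3,4,8\}$, $\beta = \{2,5,6,7\}$; the staircase condition fails at $s=2$ (since $\max(3,5)=5 \geq 4=\min(4,6)$), and
\[
M[\alpha,\beta] = \begin{pmatrix} r_1 & r_1 & r_1 & r_1 \\ r_2 & r_3 & r_3 & r_3 \\ r_2 & r_4 & r_4 & r_4 \\ r_2 & r_5 & r_6 & r_7 \end{pmatrix}.
\]
Generically no two rows and no two columns of this matrix are proportional, yet its determinant vanishes (Laplace expansion along the last two columns produces identically cancelling terms, or equivalently rows $1,2,3$ all lie in the span of $(1,0,0,0)$ and $(0,1,1,1)$). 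The correct mechanism behind the vanishing is a rank count: when the failure occurs at $s$ (say $i_s < i_{s+1} \leq j_s$), columns $j_s,\dots,j_m$ all agree on rows $i_1,\dots,i_{s+1}$, so after subtracting column $s$ they lie in a $(m-s-1)$-dimensional space; adding back column $s$ and the first $s-1$ columns bounds the total column rank by $m-1$. Proportional rows/columns appear only at the extreme positions $s=1$ and $s=m-1$. Your underlying observation about rank-one blocks above/below the diagonal is the right ingredient, but the step from it to singularity needs the rank argument, not proportionality. The inductive derivation of the explicit staircase product is plausible (a Schur complement on the $(1,1)$ entry reduces to a shifted min-matrix of size $m-1$), but as you note it requires careful bookkeeping and case-splitting on whether $i_1 \leq j_1$.
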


\begin{theorem}[{Gantmacher and Krein, see \cite[Theorem 2]{barrett1979theorem}}]\label{TGantKrein}
A matrix $A$ is a nonsingular Green's matrix if and only if its inverse is a symmetric tridiagonal matrix with nonzero superdiagonal elements.
\end{theorem}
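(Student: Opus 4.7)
The plan is to handle each direction separately: the forward direction ($A$ a nonsingular Green's matrix $\Rightarrow A^{-1}$ symmetric tridiagonal with nonzero superdiagonal) reduces to a cofactor computation using the minor formula \eqref{EGreenDet} of Theorem \ref{Tgreen}, while the converse ($A^{-1}$ symmetric tridiagonal with nonzero superdiagonal $\Rightarrow A$ Green's) extracts the single-pair structure of $A$ from a three-term recurrence.

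\emph{Forward direction.} Let $A = (p_{\min(i,j)}q_{\max(i,j)})$ be nonsingular. Symmetry of $A^{-1}$ is immediate from symmetry of $A$. For tridiagonality, I use the cofactor formula
\[
(A^{-1})_{ij} = \frac{(-1)^{i+j}}{\det A}\det A[\{1,\dots,n\}\setminus\{j\},\{1,\dots,n\}\setminus\{i\}].
\]
When $|i-j|\ge 2$, the ordered index sequences obtained by deleting $j$ from the row indices and $i$ from the column indices straddle each other, and a direct check shows that the hypothesis $\max(i_s,j_s) < \min(i_{s+1},j_{s+1})$ in \eqref{EGreenDet} fails at one position; hence the minor vanishes. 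When $|i-j|=1$, the hypothesis does hold, and \eqref{EGreenDet} expresses the minor as $p_1q_n$ times a product of $2\times 2$ determinants of the form $p_{s+1}q_s - p_sq_{s+1}$. Since $\det A = p_1q_n\prod_{s=1}^{n-1}(p_{s+1}q_s - p_sq_{s+1})$ by applying \eqref{EGreenDet} with $\alpha=\beta=\{1,\dots,n\}$, nonsingularity forces each such factor to be nonzero, so the superdiagonal entries of $A^{-1}$ are nonzero.

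\emph{Converse direction.} Let $T:=A^{-1}$ be symmetric tridiagonal with diagonal entries $t_{ii}$ and sub/superdiagonal entries $t_{i,i+1}=t_{i+1,i}\ne 0$. The identity $TA=I$ forces each column of $A$ to satisfy the three-term recurrence
\[
t_{i,i-1}a_{i-1,j} + t_{ii}a_{ij} + t_{i,i+1}a_{i+1,j}=0 \quad \text{for } i\ne j,
\]
together with top and bottom boundary conditions coming from rows $1$ and $n$ of $T$. Let $(u_i)$ be the unique-up-to-scale solution satisfying the top boundary and $(v_i)$ the solution satisfying the bottom boundary; the nonvanishing of $t_{i,i+1}$ makes the forward/backward step well-defined. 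Then $a_{ij}=c_j u_i$ for $i\le j$ and $a_{ij}=d_j v_i$ for $i\ge j$, for some scalars $c_j, d_j$. Since $T$ is symmetric so is $A$, and for $i<j$ the equality $c_j u_i = a_{ij} = a_{ji} = d_i v_j$ separates variables to yield $d_i/u_i = c_j/v_j = \lambda$ for a single constant $\lambda$. Setting $p_i:=\lambda u_i$ and $q_i:=v_i$ gives $a_{ij} = p_{\min(i,j)}q_{\max(i,j)}$, so $A$ is Green's; nonsingularity is automatic since $T$ is invertible.

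\emph{Main obstacle.} The delicate step is the separation-of-variables argument in the converse: one must verify that the ratios $d_i/u_i$ and $c_j/v_j$ coincide for \emph{all} indices, yielding a single pair $(p_i,q_i)$ rather than $j$-dependent constants. The nonzero superdiagonal hypothesis is essential here: if some $t_{i,i+1}=0$, the recurrence decouples, $T$ becomes block diagonal, and its inverse likewise—ruling out the connected single-pair Green's form. A secondary subtlety is handling positions where $u_i$ or $v_i$ could vanish; one checks that such a vanishing, propagated through the nondegenerate three-term recurrence, would force an entire row and column of $A$ to be zero, contradicting the invertibility of $T$.
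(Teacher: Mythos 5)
First, note that the paper does not prove this statement -- it is quoted from Gantmacher--Krein via Barrett's Theorem~2 -- so your argument must stand on its own. Your forward direction does: the adjugate formula together with Equation~\eqref{EGreenDet} shows that for $|i-j|\geq 2$ the consecutiveness hypothesis fails (with $i<j$, deleting row $j$ and column $i$ gives index sequences with $\max(\alpha_s,\beta_s)=i+1=\min(\alpha_{s+1},\beta_{s+1})$ at $s=i$, so the minor vanishes), while for $j=i+1$ the minor equals $\det A$ divided by the single factor $p_{i+1}q_i-p_iq_{i+1}$, which nonsingularity forces to be nonzero; hence $A^{-1}$ is tridiagonal with nonzero superdiagonal.

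The genuine gap is in the converse, precisely at the step you yourself flag as the main obstacle. The separation of variables $d_i/u_i=c_j/v_j=\lambda$ divides by $u_i$ and $v_j$, and your proposed justification -- that a vanishing $u_i$ or $v_j$ would force a whole row and column of $A$ to vanish, contradicting invertibility -- is false. Take
\[
T=\begin{pmatrix}1&1&0\\1&1&1\\0&1&1\end{pmatrix},\qquad A=T^{-1}=\begin{pmatrix}0&1&-1\\1&-1&1\\-1&1&0\end{pmatrix}.
\]
Here $T$ is symmetric tridiagonal with nonzero superdiagonal, the top-boundary solution is $u=(1,-1,0)$ with $u_3=0$, yet no row of $A$ vanishes and $A$ is a perfectly good Green's matrix (take $p=(1,-1,0)$, $q=(0,1,-1)$). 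So the degenerate case is not a contradiction to be excluded but a legitimate case your argument must handle, and $d_3/u_3$ is genuinely undefined there. Fortunately the repair is easy and makes the second half of your construction superfluous: normalize $u_1=1$; the equations $(TA)_{ij}=0$ for $i=1,\dots,j-1$ determine $(a_{1j},\dots,a_{jj})$ from $a_{1j}$ alone (each step solves for the next entry using a nonzero superdiagonal coefficient), whence $a_{ij}=u_i\,a_{1j}$ for all $i\leq j$. Setting $p_i:=u_i$ and $q_j:=a_{1j}$ and using the symmetry of $A$ for $i>j$ gives $a_{ij}=p_{\min(i,j)}q_{\max(i,j)}$ directly, with no division and no need for $v$, $d$, or $\lambda$.
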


Using Equation \eqref{EGreenDet}, one can compute the minors of a Green's matrix, and obtain and explicit formula for its inverse via Cramer's rule. 
\begin{theorem}[see e.g.~Yamamoto \cite{yamamoto2001inversion}]\label{TGreenInverse}
Let $A = (p_{\min(i,j)} q_{\max(i,j)})_{i,j=1}^n$ be a non-singular Green's matrix. Then
{\small\[
A^{-1} = \begin{pmatrix}
b_1 & a_2& & & \\
a_2 & b_2 & a_3 & & \\
& \ddots & \ddots & \ddots & \\
 & & a_{n-1} & b_{n-1} & a_n \\
  & & & a_n & b_n
\end{pmatrix}, 
\]}
where 
\[
a_{i+1} = \frac{1}{p_i q_{i+1} - p_{i+1}q_i} \qquad (1 \leq i \leq n-1), 
\]
and 
\[
b_i = \begin{cases}
-\frac{p_2}{p_1(p_1q_2-p_2q_1)} & (n=1) \\
-\frac{p_{i-1}q_{i+1}-p_{i+1}q_{i-1}}{(p_{i-1}q_i -p_iq_{i-1})(p_i q_{i+1}-p_{i+1}q_i)} & (2 \leq i \leq n-1) \\
-\frac{q_{n-1}}{q_n(p_{n-1}q_n-p_nq_{n-1})} & (i=n).
\end{cases}
\]
\end{theorem}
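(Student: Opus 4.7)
The plan is to reduce to Cramer's rule and invoke Theorem \ref{Tgreen}. By Theorem \ref{TGantKrein}, the inverse of a non-singular Green's matrix $A$ is already known to be symmetric and tridiagonal with non-zero superdiagonal entries, so only the super-diagonal entries $a_2,\dots,a_n$ and the main-diagonal entries $b_1,\dots,b_n$ have to be identified. By Cramer's rule,
\[
(A^{-1})_{ij} = \frac{(-1)^{i+j}}{\det A}\,\det A[\{1,\dots,n\}\setminus\{j\},\,\{1,\dots,n\}\setminus\{i\}],
\]
and each of the three quantities that appears here -- $\det A$, the size-$(n-1)$ minor obtained by deleting row $i+1$ and column $i$, and the size-$(n-1)$ principal minor obtained by deleting row and column $i$ -- is a minor of the Green's matrix $A$, hence is directly computable from Equation \eqref{EGreenDet}.

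First I would apply Equation \eqref{EGreenDet} with $\alpha=\beta=\{1,\dots,n\}$. The interleaving hypothesis $\max(\alpha_s,\beta_s)<\min(\alpha_{s+1},\beta_{s+1})$ is automatic in this case, and the formula simplifies to
\[
\det A \;=\; p_1 q_n \prod_{s=1}^{n-1}\bigl(p_{s+1}q_s - p_s q_{s+1}\bigr).
\]
Next, for the superdiagonal entry $a_{i+1}=(A^{-1})_{i,i+1}$, take $\alpha=\{1,\dots,n\}\setminus\{i+1\}$ and $\beta=\{1,\dots,n\}\setminus\{i\}$. I would check that the interleaving hypothesis of Theorem \ref{Tgreen} still holds (the critical index being $s=i$, where $\max(\alpha_i,\beta_i)=i+1<i+2=\min(\alpha_{i+1},\beta_{i+1})$) and observe that the resulting product of $2\times 2$ determinants coincides with the one appearing in $\det A$ except that exactly one factor has been removed, namely the factor indexed by $s=i$. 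Dividing by $\det A$ and absorbing the sign $(-1)^{2i+1}=-1$ yields $a_{i+1}=1/(p_iq_{i+1}-p_{i+1}q_i)$.

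For the interior diagonal entries $b_i$ with $2\le i\le n-1$, apply the same procedure with $\alpha=\beta=\{1,\dots,n\}\setminus\{i\}$. Here the two factors of the $\det A$-product indexed by $s=i-1$ and $s=i$ are collapsed into the single factor $p_{i+1}q_{i-1}-p_{i-1}q_{i+1}$ arising from the new pair $(k_i,l_{i-1})=(i+1,i-1)$. Dividing by $\det A$ and rewriting in terms of $a_i$ and $a_{i+1}$ gives the displayed value of $b_i$. The endpoint cases $b_1$ and $b_n$ come from the slightly simpler variants $\alpha=\beta=\{2,\dots,n\}$ and $\alpha=\beta=\{1,\dots,n-1\}$, in which only one factor of the product is affected and the boundary prefactor (either $p_1$ or $q_n$) must also be tracked.

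The main obstacle is bookkeeping rather than conceptual: one must verify the interleaving hypothesis of Theorem \ref{Tgreen} for each of the submatrices under consideration (otherwise the formula would incorrectly return $0$), and correctly match up which $2\times 2$ determinants in the $\det A$-product survive, disappear, or are replaced when a row and a column are deleted. Once this bookkeeping is settled, the algebraic simplifications producing the stated formulas for $a_{i+1}$ and $b_i$ are routine.
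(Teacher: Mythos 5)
Your proposal is correct and follows precisely the route the paper itself sketches immediately before the theorem (``Using Equation \eqref{EGreenDet}, one can compute the minors of a Green's matrix, and obtain an explicit formula for its inverse via Cramer's rule''), which is also the method carried out in the cited reference of Yamamoto. The determinant formula, the identification of which $2\times 2$ factors survive or collapse when a row and column are deleted, and the final sign bookkeeping all check out against the stated values of $a_{i+1}$ and $b_i$.
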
 

With the above results in hand, we can now prove our first main result. 

\begin{proof}[Proof of Theorem \ref{Tmain1}]
To simplify the notation, let $A := S(X)$.\hfill

\noindent$\mathbf{(1) \Rightarrow (3).}$ Suppose $A \in \TN_2$ and consider the submatrix $B := A[\{i,j,k\}]$. By assumption $B \in \TN_2$. Moreover, $\det(B) \geq 0$ by Theorem \ref{Tbeslin} and Remark \ref{RBeslin}, and so $B \in \TN$. Properties $(3)$ now follow immediately from Proposition \ref{P3x3}(2).

\noindent$\mathbf{(3) \Leftrightarrow (4)}.$ This follows from the $(2) \Leftrightarrow (3)$ equivalence in Proposition \ref{P3x3}.

\noindent$\mathbf{(4) \Rightarrow (2).}$  Suppose $(4)$ holds and let $1 \leq i \leq j \leq n$. Then 
\begin{align*}
\gcd(x_1,x_j) \gcd(x_i,x_n) &= \frac{\gcd(x_1,x_i)\gcd(x_i,x_j)\gcd(x_i,x_n)}{x_i} \\
&=  \frac{\gcd(x_1,x_i)\gcd(x_i,x_n)}{x_i} \gcd(x_i,x_j) \\
&= \gcd(x_1,x_n) \gcd(x_i,x_j).
\end{align*}
Hence
\begin{equation}\label{EGreenGCD}
a_{ij} = \gcd(x_i,x_j) = \frac{\gcd(x_1,x_j) \gcd(x_i,x_n)}{\gcd(x_1,x_n)} \qquad (1 \leq i \leq j \leq n).
\end{equation}
It follows that $A$ is a Green's matrix with $p_i := \gcd(x_i,x_n)/\gcd(x_1,x_n)$ and $q_j := \gcd(x_1,x_j)$. Hence, by Theorem \ref{Tgreen}, the matrix $A$ is TN if and only if 
\begin{equation}\label{ETNgcd}
\frac{\gcd(x_i,x_n)}{\gcd(x_1,x_i)} \leq \frac{\gcd(x_{i+1},x_n)}{\gcd(x_1,x_{i+1})}
\end{equation}
for all $1 \leq i \leq n-1$. By Remark \ref{Rtriple}, we have $\gcd(x_i,x_k) = \gcd(x_i,x_j,x_k)$. It follows that for all $1 \leq i \leq n$, 
\begin{align}
\gcd(x_i,x_n) &= \gcd(x_i,x_{i+1},\dots, x_n)\label{Egcdp} \\
\gcd(x_1,x_i) &= \gcd(x_1,x_2,\dots,x_i)\label{Egcdq}.
\end{align}
Thus, $\gcd(x_i,x_n) \leq \gcd(x_{i+1},x_n)$ and $\gcd(x_1,x_i) \geq \gcd(x_1,x_{i+1})$ and so Equation \eqref{ETNgcd} always holds. We therefore conclude that $A \in \TN$.

Clearly, we have $\mathbf{(2) \Rightarrow (1)}$ and so the four statements are equivalent.

Finally, suppose $1 \leq i \leq j \leq k \leq l \leq n$. Proceeding as above, we have
\begin{align*}
\gcd(x_i,x_k) \gcd(x_j,x_l) &= \frac{\gcd(x_i,x_j)\gcd(x_j,x_k)\gcd(x_j,x_l)}{x_j} \\
&=  \frac{\gcd(x_i,x_j)\gcd(x_j,x_l)}{x_j} \gcd(x_j,x_k) \\
&= \gcd(x_i,x_l) \gcd(x_j,x_k), 
\end{align*}
as claimed.
\end{proof}

We now prove our second main result, which provides an explicit characterization of the vectors $X = (x_1,\dots,x_n) \in \N^n$ for which the matrix $S(X)$ is TN.

\begin{proof}[Proof of Theorem \ref{Tmain2}] As above, set $A := S(X)$.  By Theorem \ref{Tmain1}(3), $A \in \TN$ if and only if $\gcd(x_i,x_k) = \gcd(x_i,x_j,x_k)$ and $x_j \cdot \gcd(x_i, x_k) \mid x_i x_k$ for all $1 \leq i \leq j \leq k \leq n$. Using the notation in Equation \eqref{EprimeFac}, these two conditions are equivalent to
\begin{align*}
\min(e_t(x_i), e_t(x_k)) &\leq e_t(x_j) \\
e_t(x_j) + \min(e_t(x_i), e_t(x_k)) &\leq e_t(x_i) + e_t(x_k).
\end{align*}
for all $t \in \N$. Hence, $A \in \TN$ if and only if for all $1 \leq i \leq j \leq k \leq n$ and all $t \in \N$, 
\begin{equation}\label{EineqPowers}
\min(e_t(x_i), e_t(x_k)) \leq e_t(x_j) \leq \max(e_t(x_i), e_t(x_k)).
\end{equation}
For $t \in \N$, define a vector 
\[
X_t := (p_t^{e_t(x_1)}, \dots, p_t^{e_t(x_n)}), 
\]
and let $A_t := S(X_t)$. Observe that by Equation \eqref{EineqPowers} and the above arguments, 
\[
A \in \TN \iff A_t \in \TN \textrm{ for all } t=1,\dots,m.
\]
It therefore suffices to resolve the case where all the integers $x_i$ are powers of a given prime $p$. Hence, assume $x_i = p^{\alpha_i}$ for some prime $p \in \N$ and some integers $\alpha_i \in \Z_{\geq 0}$ (not necessarily all distinct). We claim that the matrix $A = S(x_1,\dots, x_n)$ is TN if and only if the sequence $(\alpha_i)_{i=1}^m$ is monotonic. Indeed, suppose $A \in \TN$ and let $1 \leq i \leq j \leq k \leq n$. By Equation \eqref{EineqPowers}, we have 
\begin{equation}\label{Ealphas}
\min(\alpha_i, \alpha_k) \leq \alpha_j \leq \max(\alpha_i,\alpha_k).
\end{equation}
We consider two cases:

\noindent{\bf Case 1.} $\alpha_i < \alpha_j$ If $\alpha_k \leq \alpha_i$, then we obtain from the right hand-side of Equation \eqref{Ealphas} that $\alpha_j \leq \alpha_i$, a contradiction. Thus $\alpha_k > \alpha_i$ and the right hand-side of \eqref{Ealphas} implies that $\alpha_j \leq \alpha_k$.
\smallskip

\noindent{\bf Case 2.} $\alpha_i > \alpha_j$. Similarly, if $\alpha_k \geq \alpha_i$, then the left hand-side of Equation \eqref{Ealphas} implies that $\alpha_j \geq \alpha_i$, a contradiction. Thus $\alpha_k < \alpha_i$ and the right hand-side of \eqref{Ealphas} now implies that $\alpha_j \geq \alpha_k$.

\noindent It follows easily from the above analysis that the sequence $(\alpha_t)_{t=1}^m$ is monotonic.

Conversely, if the sequence $(\alpha_t)_{t=1}^m$ is monotonic, then it satisfies Equation \eqref{Ealphas} and it follows that the $A \in \TN$.
\end{proof}

As showed in the proof of Theorem \ref{Tmain1}, a totally nonnegative GCD matrix is necessarily a Green's matrix (see Equation \eqref{EGreenGCD}). Our last result show that the converse also holds. 

\begin{proof}[Proof of Theorem \ref{Tmain3}]As before, let $A := S(X)$ to simplify the notation.

\noindent$\mathbf{(1) \Rightarrow (2).}$ This was already shown in the proof of Theorem \ref{Tmain1} (see Equation \eqref{EGreenGCD}).

\noindent$\mathbf{(2) \Rightarrow (1).}$ Suppose $A$ is a Green's matrix with entries 
\[
a_{ij} = p_{\min(i,j)} q_{\max(i,j)}.
\] 
Then for $1 \leq i \leq j \leq k \leq n$,
\[
\gcd(x_i, x_j) \gcd(x_j,x_k) = (p_i q_j) (p_j q_k) = (p_j q_j) (p_i q_k) = x_j \cdot \gcd(x_i,x_k).
\]
Thus, by Theorem \ref{Tmain1}(4), the matrix $A$ is TN.

\noindent$\mathbf{(2) \Leftrightarrow (3).}$ This is Gantmacher and Krein's result (see Theorem \ref{TGantKrein}).

The expressions for the minors and the inverse of $A$ are immediate consequences of Theorems \ref{Tgreen} and \ref{TGreenInverse} applied with $p_i, q_i$ as given after Equation \eqref{EGreenGCD}.  
\end{proof}

\section{Totally nonnegative kernels}\label{Skernels}
A natural reformulation of the above results involves the notion of positive semidefinite and totally nonnegative kernels. 
\begin{definition}[see Karlin \cite{karlin1968total}]
Let $X \subseteq \N$. The kernel $K : X \times X \to \R$ is said to be 
\begin{enumerate}
\item {\it positive semidefinite} if the matrix $(K(x_i, x_j))_{i,j=1}^n$ is positive semidefinite for any choice of integers $x_1 < x_2 < \dots x_n$ in $X$ and any $n \geq 1$. 
\item {\it totally nonnegative} if the matrix $(K(x_i, x_j))_{i,j=1}^n$ is totally nonnegative for any choice of integers $x_1 < x_2 < \dots x_n$ in $X$ and any $n \geq 1$. 
\end{enumerate}
\end{definition}
\noindent Positive definite and totally positive kernels are defined analogously. 

\begin{remark}
More generally, for any totally ordered sets $X$ and $Y$, the kernel $K: X \times Y \to \R$ is said to be {\it totally nonnegative} if the matrix $(K(x_i, y_j))_{i,j=1}^n$ is totally nonnegative for any choice of $x_1 < x_2 < \dots, x_n$ in $X$, any choice of $y_1 < y_2 < \dots < y_n$ in $Y$, and any $n \geq 1$. In what follows, we restrict ourselves to the case where $X = Y$.
\end{remark}

Observe that, in the language of kernels, Beslin and Leigh's result (Theorem \ref{Tbeslin}) shows that the kernel
\[
K(x,y) = \gcd(x,y)
\]
is positive definite on $X = \N$ (and hence on any subset of $\N$). Theorem \ref{Tmain2} resolves the analogous problem for total nonnegativity. For simplicity, we only state the result in the case where the cardinality of $X$ is infinite. 
\begin{theorem}\label{Tkernel}
Let $(x_i)_{i=1}^\infty \subseteq \N$ be an increasing sequence of positive integers and let $X = \{x_1,x_2,\dots\}$. Then the following are equivalent: 
\begin{enumerate}
\item  The kernel $K(x,y) = \gcd(x,y)$ is totally nonnegative on $X$.
\item For each $t \in \N$, the sequence $(e_t(x_i))_{i=1}^\infty$ is monotonic.
\end{enumerate}
\end{theorem}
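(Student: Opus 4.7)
The plan is to reduce Theorem \ref{Tkernel} to its finite analogue, Theorem \ref{Tmain2}, by restricting to finite initial segments of $X$. The direction $(2) \Rightarrow (1)$ is the more immediate one. I would fix an arbitrary finite ordered subset $y_1 < \cdots < y_m$ of $X$, write each $y_k$ as $x_{i_k}$, and observe that for every $t$ the exponent sequence $(e_t(x_{i_k}))_{k=1}^m$ is a subsequence of the monotonic sequence $(e_t(x_i))_{i=1}^\infty$, hence itself monotonic. Theorem \ref{Tmain2} then gives directly that the GCD matrix $(\gcd(y_k,y_l))_{k,l=1}^m$ is TN once $m \geq 3$. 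The boundary cases $m \in \{1,2\}$ are handled by hand: all entries are positive, and $y_1 y_2 - \gcd(y_1,y_2)^2 \geq 0$ since $\gcd(y_1,y_2)^2 \mid y_1 y_2$.

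For the direction $(1) \Rightarrow (2)$, I would fix $t \in \N$ and exploit that, for every $N \geq 3$, the matrix $(\gcd(x_i,x_j))_{i,j=1}^N$ is TN by hypothesis; Theorem \ref{Tmain2} then yields that the finite initial segment $(e_t(x_i))_{i=1}^N$ is monotonic. The remaining step is to upgrade monotonicity of every initial segment to monotonicity of the full infinite sequence, which I would handle by a short contradiction argument: if the full sequence admitted both a strict increase at some index $i$ and a strict decrease at some index $j$, then taking $N := \max(i,j)+2$ would produce an initial segment of length $N$ exhibiting both features simultaneously, contradicting its already-established monotonicity.

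No serious obstacle is anticipated. The real content of the theorem has already been established in Theorem \ref{Tmain2}; the extension to countable $X$ is a routine compactness-style argument over finite truncations, and the only mildly delicate point is the elementary coherence step showing that a sequence all of whose initial segments of length at least three are monotonic must itself be monotonic in a single direction.
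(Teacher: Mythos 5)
Your proof is correct, and it fills in exactly the reduction the paper leaves implicit: the paper states Theorem \ref{Tkernel} as an immediate consequence of Theorem \ref{Tmain2} without writing out the passage between the finite and infinite settings. Your treatment of both directions (subsequences of monotone sequences are monotone; the $m\le 2$ base cases; the coherence argument that a sequence with all initial segments monotone, but itself non-monotone, would exhibit a strict increase and a strict decrease inside a common finite prefix) is the natural way to make that passage precise and matches the paper's intent.
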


Having characterized the sets $X \subseteq \N$ over which the kernel $K(x,y) = \gcd(x,y)$ is totally nonnegative, it is natural to examine how such kernels can be transformed while remaining totally nonnegative.  More specifically, we are seeking functions $f: \N \to \C$ with the property that $f \circ K$ is totally nonnegative on $X$ whenever $K$ is.  The following result from \cite{belton2017total} shows that for general kernels, not many functions $f$ have this property. 

\begin{theorem}[{\cite[Theorem 2.1]{belton2017total}}]\label{Tfixeddim}
Let $F: [ 0, \infty ) \to \R$ be a function and  let
$\D := \min( m, n )$, where~$m$ and~$n$ are positive
integers. The following are equivalent.
\begin{enumerate}
\item $F$ preserves TN when applied entrywise on $m \times n$ matrices. 
\item $F$ preserves TN when applied  entrywise on $\D \times \D$ matrices. 
\item $F$ is either a non-negative constant or
\begin{enumerate}
\item[(a)] $(\D = 1)$ $F( x ) \geq 0$;
\item[(b)] $(\D = 2)$ $F( x ) = c x^\alpha$ for some $c > 0$ and some
$\alpha \geq 0$;
\item[(c)] $(\D = 3)$ $F( x ) = c x^\alpha$ for some $c > 0$ and some
$\alpha \geq 1$;
\item[(d)] $(\D \geq 4)$ $F( x ) = c x$ for some $c > 0$.
\end{enumerate}
\end{enumerate}
\end{theorem}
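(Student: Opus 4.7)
The plan is to establish the cyclic chain of implications $(1) \Rightarrow (2) \Rightarrow (3) \Rightarrow (1)$. For $(1) \Rightarrow (2)$, given a $d \times d$ TN matrix $A$ with $d = \min(m,n)$, I would inflate $A$ to an $m \times n$ matrix $\widetilde{A}$ by duplicating rows and columns. Row/column duplication preserves total nonnegativity: every new minor either contains two identical rows/columns (and thus vanishes) or is itself a minor of $A$. Since the entrywise application of $F$ commutes with duplication, $F[A]$ appears as a submatrix of $F[\widetilde{A}]$, and submatrices of TN matrices are TN.

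For $(3) \Rightarrow (1)$, I would directly verify each listed function preserves TN entrywise. Nonnegative constants and the linear map $F(x) = cx$ are immediate. For $F(x) = cx^\alpha$ one reduces to $c = 1$: on $2 \times 2$ TN matrices, $(ad)^\alpha \geq (bc)^\alpha$ follows from $ad \geq bc \geq 0$ together with $\alpha \geq 0$; on $3 \times 3$ TN matrices with $\alpha \geq 1$ one invokes FitzGerald--Horn-type inequalities for Hadamard powers, adapted to the TN setting by exploiting the bidiagonal factorization $A = L_1 \cdots L_k D U_k \cdots U_1$ of nonsingular TN matrices and tracking preservation factor by factor.

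The main content lies in $(2) \Rightarrow (3)$, which I would resolve by case analysis on $d$. The case $d=1$ is immediate. For $d=2$, the rank-one TN test matrix $\begin{pmatrix} 1 & x \\ y & xy \end{pmatrix}$ yields the Cauchy-type inequality $F(1)F(xy) \geq F(x)F(y)$; combined with monotonicity extracted from the TN matrices $\begin{pmatrix} 1 & 1 \\ 1 & x \end{pmatrix}$ for $x \geq 1$, together with a continuity/measurability argument coming from the TN-preservation hypothesis on a dense family, this forces $F(x) = cx^\alpha$ with $c, \alpha \geq 0$. For $d=3$, Vandermonde-type TN matrices $\begin{pmatrix} 1 & t_i & t_i^2 \end{pmatrix}$ with $0 < t_1 < t_2 < t_3$ produce a $3\times 3$ minor in $F[A]$ whose sign flips unless $\alpha \geq 1$, ruling out $\alpha \in [0,1)$.

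The principal obstacle is the $d \geq 4$ case, where one must exhibit an explicit $4 \times 4$ TN matrix whose entrywise $\alpha$-th power has a negative minor for every $\alpha > 1$, thereby collapsing the admissible exponent to exactly $1$. The typical construction uses a carefully perturbed rank-one matrix or a Hankel-type matrix whose $4 \times 4$ determinant, viewed as a polynomial in $\alpha$, has an isolated root at $\alpha = 1$ and changes sign as $\alpha$ crosses $1$. Producing this sign-change identity --- a subtle polynomial manipulation --- is the deepest step and mirrors the Schoenberg--FitzGerald--Horn rigidity phenomenon for entrywise positivity preservers in fixed dimension.
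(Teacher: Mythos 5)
This statement is quoted in the paper as an external result from \cite{belton2017total}; the paper supplies no proof of it, so your proposal must stand on its own. As an outline it identifies the right skeleton (the duplication argument for $(1)\Rightarrow(2)$ is correct and standard, and the $d=2$ analysis via rank-one test matrices plus monotonicity is essentially the right route to $F(x)=cx^{\alpha}$), but the two steps that carry all the mathematical content are either deferred or rest on a method that does not work. First, your argument that $x^{\alpha}$ with $\alpha\geq 1$ preserves TN on $3\times 3$ matrices ``by exploiting the bidiagonal factorization $A=L_1\cdots L_kDU_k\cdots U_1$ and tracking preservation factor by factor'' cannot succeed as stated: the entrywise power map does not commute with matrix multiplication, i.e.\ $(BC)^{\circ\alpha}\neq B^{\circ\alpha}C^{\circ\alpha}$ in general, so knowing that each bidiagonal factor behaves well under $x\mapsto x^{\alpha}$ tells you nothing about $A^{\circ\alpha}$. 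The genuine proof of this sufficiency (the analogue of the FitzGerald--Horn argument) works directly with the $3\times 3$ determinant of the Hadamard power, e.g.\ via an integral representation of $a^{\alpha}-b^{\alpha}$, and is the part of $(3)\Rightarrow(1)$ that actually requires an idea.

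Second, for $d\geq 4$ you ask for a single $4\times 4$ TN matrix whose entrywise $\alpha$-th power has a negative minor \emph{for every} $\alpha>1$, and you acknowledge you have not produced it. This is not a uniform phenomenon: the standard rank-two construction (matrices with entries $1+x_iy_j$, whose $\alpha$-th Hadamard power has binomial coefficients $\binom{\alpha}{k}<0$ in its expansion) only rules out \emph{non-integer} $\alpha>1$, and the integer powers $\alpha=2,3,\dots$ must be excluded by a separate family of examples. Without these constructions the implication $(2)\Rightarrow(3)$ for $d\geq 4$ --- which is precisely the rigidity statement that distinguishes this theorem from its $d\leq 3$ cases --- is unproved. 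Similarly, in the $d=2$ step you should close the functional-equation argument explicitly: supermultiplicativity from your test matrix must be paired with the reverse inequality (from matrices with $ad=bc$) to get Jensen's equation for $\log F(e^{s})$, and monotonicity then yields affinity without any extra continuity hypothesis; the appeal to ``a continuity/measurability argument on a dense family'' is not needed and, as phrased, is not justified.
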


As we now show, the situation is very different for the $\gcd$ kernel. Recall that an arithmetic function $f: \N \to \R$ is said to be {\it multiplicative} if $f(xy) = f(x)f(y)$ whenever $\gcd(x,y) = 1$. 
\begin{theorem}\label{TkernelPres}
Let $X \subseteq \N$ and assume the kernel $K(x,y) = \gcd(x,y)$ is totally nonnegative on $X$. Suppose $f: \N \to \C$ satisfies
\begin{enumerate}
\item $f$ is multiplicative, and
\item $f(x) \leq f(y)$  for every $x, y \in \N$ such that $x \mid y$.
\end{enumerate}
Then $f \circ K$ is totally nonnegative on $X$.
\end{theorem}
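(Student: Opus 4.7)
The plan is to show, for any finite subset $x_1 < x_2 < \dots < x_n$ of $X$, that the matrix $M := (f(\gcd(x_i,x_j)))_{i,j=1}^n$ is a Green's matrix whose parameters satisfy the hypotheses of Theorem \ref{Tgreen}, and hence is $\TN$. To set up, multiplicativity forces $f(1) \in \{0,1\}$; the case $f(1)=0$ yields $f \equiv 0$ and is trivial, so assume henceforth that $f(1)=1$. The divisibility hypothesis then gives $f(n) \geq f(1) = 1 > 0$ for every $n \in \N$, so all entries of $M$ are real and strictly positive.

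The key decomposition uses multiplicativity together with the factorization $\gcd(x_i,x_j) = \prod_t p_t^{\min(e_t(x_i), e_t(x_j))}$: since the prime-power factors are pairwise coprime,
\[
f(\gcd(x_i,x_j)) = \prod_t f\bigl(p_t^{\min(e_t(x_i), e_t(x_j))}\bigr),
\]
exhibiting $M$ as the entrywise product $M = \prod_t M_t$ over the finitely many primes dividing some $x_i$. By Theorem \ref{Tmain2} applied to the finite sample $x_1 < \dots < x_n$, the hypothesis that $K = \gcd$ is $\TN$ on $X$ forces, for each prime $p_t$, the sequence $(e_t(x_i))_{i=1}^n$ to be monotonic. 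If it is non-decreasing, then $M_t[i,j] = f(p_t^{e_t(x_{\min(i,j)})})$ is a Green's matrix with $p^{(t)}_i = f(p_t^{e_t(x_i)})$ and $q^{(t)}_j = 1$; if non-increasing, then $M_t[i,j] = f(p_t^{e_t(x_{\max(i,j)})})$ is Green's with $p^{(t)}_i = 1$ and $q^{(t)}_j = f(p_t^{e_t(x_j)})$.

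The structural observation I will exploit is that the entrywise product of Green's matrices is again Green's: from $A_t[i,j] = p^{(t)}_{\min(i,j)} q^{(t)}_{\max(i,j)}$ one reads $\prod_t A_t[i,j] = P_{\min(i,j)} Q_{\max(i,j)}$ with $P_i := \prod_t p^{(t)}_i$ and $Q_j := \prod_t q^{(t)}_j$. Combined with the divisibility monotonicity of $f$, each factor in $P_i$ is non-decreasing in $i$ (contributed by primes with non-decreasing exponents) while each factor in $Q_j$ is non-increasing in $j$. Hence $P_i/Q_i$ is non-decreasing in $i$ and all parameters are strictly positive, so Theorem \ref{Tgreen} delivers $M \in \TN$. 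Since this holds for every finite ordered subset of $X$, the kernel $f \circ K$ is totally nonnegative on $X$.

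The main obstacle to guard against is that, in general, the entrywise product of two $\TN$ matrices need not be $\TN$, so one cannot simply argue ``each $M_t$ is $\TN$, hence $\prod_t M_t$ is $\TN$''. The workaround leverages the sharper fact that Green's matrices form a Hadamard-closed subclass of $\TN$ and that the sign/monotonicity condition of Theorem \ref{Tgreen} survives taking the product precisely when the $p$-parameters move upward and the $q$-parameters downward in the index. Once this Green's-matrix decomposition is in place, the rest of the argument is essentially bookkeeping.
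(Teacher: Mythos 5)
Your proof is correct, and it takes a genuinely different route from the paper's. The paper represents the TN matrix $S(X)$ as a single Green's matrix with $p_i = \gcd(x_i,x_n)/\gcd(x_1,x_n)$ and $q_j = \gcd(x_1,x_j)$, and then asserts that $\gcd(p_{\min(i,j)},q_{\max(i,j)}) = 1$ for all $i \leq j$, so that multiplicativity gives $f(p_i q_j) = f(p_i)f(q_j)$ in one stroke and $f\circ K$ inherits the Green's structure directly. You instead factor $\gcd(x_i,x_j) = \prod_t p_t^{\min(e_t(x_i),e_t(x_j))}$, apply $f$ prime by prime, exhibit each prime's contribution as a Green's matrix with either all $q$-parameters or all $p$-parameters equal to $1$ (according as the exponent sequence is non-decreasing or non-increasing), and observe that a Hadamard product of Green's matrices is again a Green's matrix whose parameters just multiply. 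What your route buys is in fact correctness: the paper's coprimality claim fails in general. Take $X = (2,4,8)$: then $p_2 = \gcd(4,8)/\gcd(2,8) = 2$ and $q_2 = \gcd(2,4) = 2$, so $\gcd(p_2,q_2) = 2$, and with $f = \phi$ the paper's $\tilde p_2 \tilde q_2 = \phi(2)^2 = 1$ does not equal $\phi(\gcd(x_2,x_2)) = \phi(4) = 2$, i.e.\ the proposed parameters do not even reproduce $f\circ K$ on the diagonal. Your prime-wise decomposition sidesteps this by design: in your Green's representation $P_i = \prod_t p^{(t)}_i$ and $Q_j = \prod_t q^{(t)}_j$ are supported on disjoint sets of primes (for each $t$ exactly one of $p^{(t)}, q^{(t)}$ is nontrivial), so multiplicativity is invoked only on genuinely coprime arguments $p_t^a$ with $t$ distinct. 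The small price you pay---ruling out $f(1)=0$, deducing $f>0$ from $1\mid n$, and tracking a monotonicity direction per prime---is handled cleanly. In short, your argument is both valid and repairs a gap in the paper's own proof of this theorem.
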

\begin{proof}
Let $x_1 < x_2 < \dots < x_n$ be a finite subset of $X$. Since $K$ is $\TN$ on $X$, the $\gcd$ matrix $A := (K(x_i, x_j))_{i,j=1}^n$ is $\TN$. By Theorem \ref{Tmain3}, the matrix $A$ is a Green's matrix 
\[
K(x_i, x_j) = p_{\min(i,j)}q_{\max(i,j)}, 
\]
with $p_i := \gcd(x_i,x_n)/\gcd(x_1,x_n)$ and $q_j := \gcd(x_1,x_j)$ --  see Equation \eqref{EGreenGCD}. Using Equations \eqref{Egcdp} and \eqref{Egcdq}, it follows easily that 
\[
\gcd(p_{\min(i,j)},q_{\max(i,j)}) = 1 \qquad \textrm{for all } 1 \leq i \leq j \leq n.
\]
Thus, by Assumption (1) of the theorem, we obtain that 
\[
f(K(x_i, x_j)) = f(p_{\min(i,j)}) f(q_{\max(i,j)}) = \tilde{p}_{\min(i,j)}\tilde{q}_{\max(i,j)},
\]
where $ \tilde{p}_i= f(p_i)$ and $ \tilde{q}_i = f(q_i)$. Since $A$ is $\TN$, we have by Theorem \ref{Tgreen} that 
\[
\frac{p_1}{q_1} \leq \frac{p_2}{q_2} \leq \dots \leq \frac{p_n}{q_n}.
\]
Now, using Equations \eqref{Egcdp} and \eqref{Egcdq}, it follows that for $1 \leq i < n$   
\[
p_i \mid p_{i+1} \quad \textrm{ and } \quad q_{i+1} \mid q_i.
\]
Hence, by Assumption (2) of the theorem, we conclude that $f(p_i) \leq f(p_{i+1})$ and $f(q_i) \geq f(q_{i+1})$, and therefore that 
\[
\frac{\tilde{p}_1}{\tilde{q}_1} \leq \frac{\tilde{p}_2}{\tilde{q}_2} \leq \dots \leq \frac{\tilde{p}_n}{\tilde{q}_n}.
\]
Theorem \ref{Tgreen} now shows that the matrix with entries $f(K(x_i, x_j))$ is $\TN$. Since this is true for any choice of the $x_i$s, we conclude that $f \circ K$ is $\TN$.
\end{proof}

Note that in Theorem \ref{TkernelPres}, we assume that $f(x) \leq f(y)$ only if $x \mid y$. Interestingly, if we assume $f$ is both multiplicative and non-decreasing on $\N$, then $f(n) = n^\alpha$ for some $\alpha \geq 0$. This remarkable result goes back to Erd\H{o}s \cite{erdos1946distribution}. Several authors also found simpler proofs of the result, including Moser and Lambek \cite{moser1953monotone}, Besicovitch \cite{besicovitch1962additive}, Schoenberg \cite{schoenberg1962two}, and Howe \cite{howe1986new}. Erd\H{o}s's original result is stated in terms of additive functions instead of multiplicative ones. 
\begin{theorem}[{see \cite[Theorem XI]{erdos1946distribution}}] 
Let $f: \N \to \R$ satisfy $f(mn) = f(m) + f(n)$ whenever $\gcd(m,n) = 1$, and $f(n+1) \geq f(n)$ for all $n \in \N$. Then $f(n) = C \log n$ for some constant $C$.
\end{theorem}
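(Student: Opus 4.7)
The plan is to follow Erd\H{o}s's original strategy, or one of its simplifications by Besicovitch, Moser--Lambek, Schoenberg, or Howe. First, setting $m = n = 1$ in the additivity relation yields $f(1) = 0$, and monotonicity then forces $f(2) \geq 0$. Define $C := f(2)/\log 2$ and $g(n) := f(n) - C \log n$. Since $\log$ is completely additive, $g$ is also additive in the gcd-$1$ sense, $g(1) = g(2) = 0$, and monotonicity of $f$ translates into the near-monotonicity
\[
g(n+1) \geq g(n) - C \log(1 + 1/n),
\]
so $g$ can decrease between consecutive integers only by $O(1/n)$. By additivity on coprime factors, the problem reduces to showing $g(p^k) = 0$ for every prime power $p^k$.

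The key intermediate step is to establish \emph{complete} additivity of $g$ on prime powers, namely $g(p^k) = k g(p)$ for every prime $p$ and every $k \geq 1$. This is a genuine new input, as it does not follow from gcd-$1$ additivity alone. I would prove it by comparing $f$ at integers of the form $p^k N$ and $p^{k-1} M$ with $\gcd(p, NM) = 1$ whose ratio tends to $1$ along a suitable sequence. Near-monotonicity then constrains $g(p^k) - g(p^{k-1}) - g(p)$ to be $o(1)$, and an averaging argument over admissible residues of $M$ modulo $p$ eliminates the dependence on the specific $N, M$ and pins the difference to $0$. Iteration yields $g(p^k) = k g(p)$, and in particular the coarse bound $f(n) = O(\log n)$.

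With complete additivity in hand, I would use simultaneous Diophantine approximation to show $g(p) = 0$ for every prime $p \neq 2$. By Kronecker's theorem, for any $\epsilon > 0$ there exist $a, b \in \N$ with $|p^a/2^b - 1| < \epsilon$. Applying near-monotonicity on the (short) interval between $p^a$ and $2^b$, together with the $O(\log n)$ upper bound on $f$, gives $|g(p^a) - g(2^b)| = o(a)$. Since $g(p^a) = a \, g(p)$, $g(2^b) = b \, g(2) = 0$, and $b/a \to \log p/\log 2$, this forces $g(p) = 0$. Additivity on coprime factors combined with $g(p^k) = k g(p)$ then yields $g \equiv 0$, i.e., $f(n) = C \log n$.

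The principal obstacle is the second step: transferring gcd-$1$ additivity to complete additivity on prime powers. Without it, one cannot meaningfully relate $f(p^a)$ to $a f(p)$, and the approximation argument in the third step collapses. This is where the monotonicity hypothesis does its most delicate work and constitutes the technical heart of Erd\H{o}s's original proof; the cited simplifications differ chiefly in how they carry out this transfer, and my plan is to adapt whichever variant is most compatible with the near-monotonicity framework above.
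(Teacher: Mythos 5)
The paper does not prove this theorem; it is cited (Erd\H{o}s, Theorem~XI) without argument, together with a list of later simplified proofs (Moser--Lambek, Besicovitch, Schoenberg, Howe). So there is no proof in the paper to compare against, and the proposal must be assessed on its own.

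Your plan has the right high-level shape (normalize by $C\log n$ to get $g$ with $g(2)=0$; upgrade gcd-$1$ additivity to complete additivity $g(p^k)=kg(p)$; then use Diophantine approximation of $\log p/\log 2$ to force $g(p)=0$), and Step~1 and Step~3 can be made rigorous along the lines you describe: for Step~3 you really need \emph{both} one-sided approximations $p^a<2^b$ and $2^{b'}<p^{a'}$ from Kronecker, since near-monotonicity of $g$ gives only a one-sided inequality across each gap, but that is a routine fix.

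The genuine gap is Step~2, and it is not a small one. Near-monotonicity says only that $g(m)\ge g(n)-C\log(m/n)$ for $m>n$; it gives no upper bound on $g(m)$ in terms of $g(n)$. In your comparison of $p^kN$ against $p^{k-1}M$ with $\gcd(p,NM)=1$, gcd-$1$ additivity turns the inequality into a statement about $g(p^k)-g(p^{k-1})$ \emph{plus} $g(N)-g(M)$, and nothing controls $g(N)-g(M)$: the two auxiliary integers are unrelated multiplicatively, and near-monotonicity bounds their difference only from one side. Chaining through $pR=Q+s_0$ (the natural choice, $M=Q+s$, $N=R$) produces inequalities that all point the same way and never close up to an equality. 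The ``averaging over admissible residues of $M$ modulo $p$'' that is supposed to rescue this is exactly the unproved content: you would need to show that the one-sided excesses cancel on average, and that is precisely where Erd\H{o}s and his simplifiers each invest their real work (typically via a preliminary bound of the form $f(n)=O(\log n)$ obtained \emph{before} complete additivity, or via a clever telescoping over products of consecutive coprime integers). As written, your Step~2 is a description of the problem rather than a proof of it, and you say as much in your last paragraph. Until that transfer from gcd-$1$ additivity to complete additivity is actually carried out, the argument does not establish the theorem.

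One smaller point: you assert that Step~2 ``in particular'' yields $f(n)=O(\log n)$, but $g(p^k)=kg(p)$ alone does not give this; you also need $|g(p)|=O(1)$ uniformly in $p$. That does follow (compare $p$ with the nearest powers of $2$ and use $g(2^k)=0$), but it is an extra step, and in several of the classical proofs the logarithmic bound is in fact established \emph{first} and then used to prove complete additivity, so the logical order you propose may need to be reversed.
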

\noindent In contrast, the weaker hypothesis in Theorem \ref{TkernelPres} is satisfied for much more than power functions. For example, it is satisfied by Euler's totient function $\phi$. 
\begin{corollary}
Let $X \subseteq \N$ and assume the kernel $K(x,y) = \gcd(x,y)$ is totally nonnegative on $X$. Then the kernel $\phi \circ K$ is totally nonnegative. 
\end{corollary}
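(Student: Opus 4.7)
The plan is to apply Theorem \ref{TkernelPres} with $f = \phi$, which reduces the proof to checking two well-known properties of Euler's totient function: (i) $\phi$ is multiplicative; and (ii) $\phi(x) \leq \phi(y)$ whenever $x \mid y$. Condition (i) is classical and requires no argument.

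For condition (ii), I would invoke the standard product formula
\[
\phi(n) = \prod_{p \mid n} p^{a-1}(p-1),
\]
where the product runs over the primes $p$ dividing $n$ and $a$ is the exponent of $p$ in the prime factorization of $n$. Given $x \mid y$, every prime dividing $x$ also divides $y$, and its exponent in $x$ is at most its exponent in $y$. Splitting the ratio $\phi(y)/\phi(x)$ prime by prime, the primes dividing $x$ contribute factors of the form $p^{b-a} \geq 1$ (where $a \leq b$ are the exponents of $p$ in $x$ and $y$ respectively), while the primes dividing $y$ but not $x$ contribute factors of the form $p^{b-1}(p-1) \geq 1$. Hence $\phi(x) \leq \phi(y)$; in fact the stronger statement $\phi(x) \mid \phi(y)$ holds.

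With both hypotheses verified, Theorem \ref{TkernelPres} yields directly that $\phi \circ K$ is totally nonnegative on $X$. There is no substantive obstacle: the entire content of the corollary is packaged into the preceding theorem, and the verification of (ii) is handled cleanly by the product formula above.
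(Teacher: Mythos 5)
Your proposal is correct and takes essentially the same approach as the paper: both proofs reduce the corollary to verifying the two hypotheses of Theorem \ref{TkernelPres} for $f = \phi$, with the multiplicativity of $\phi$ taken as classical and the divisibility-monotonicity condition checked prime by prime. The paper verifies monotonicity on prime powers and invokes multiplicativity to conclude, while you invoke the product formula for $\phi$ directly; these are two phrasings of the same computation, and you also note the slightly stronger conclusion $\phi(x) \mid \phi(y)$, which the paper does not mention but does not need.
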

\begin{proof}
That Euler's totient function is multiplicative is well-known (see e.g.~\cite[Theorem 6.4]{andrews1994number}). To verify the second assumption of Theorem \ref{TkernelPres}, first observe that for any prime number $p$, 
\[
\phi(p^{t+1}) = p^{t+1}-p^t = p^t(p-1) \geq p^{t-1}(p-1) = \phi(p^t).
\]
Using the multiplicativity of $\phi$, it follows easily that $\phi(m) \leq \phi(n)$ if $m \mid n$. 
\end{proof}

\noindent {\bf Acknowledgement.} D.G.~is partially supported by a University of Delaware Research Foundation grant, by a Simons Foundation collaboration grant for
mathematicians, and by a University of Delaware Research Foundation
Strategic Initiative grant. J.W.~is partially supported by the Summer Scholars Program of the University of Delaware.

\bibliographystyle{plain}
\bibliography{biblio}
\end{document}